\documentclass[twoside,11pt]{amsart}

\usepackage{indentfirst,latexsym,bm}
\usepackage{amsfonts,amssymb,amsmath,amsbsy}
\usepackage{dsfont,amsthm,hyperref,amscd}
\usepackage[all]{xy}
\usepackage{multirow}
\usepackage[titletoc]{appendix}
\allowdisplaybreaks[4]

\newtheorem{defi}{Definition}[section]
\newtheorem{thm}[defi]{Theorem}
\newtheorem{lem}[defi]{Lemma}
\newtheorem{pro}[defi]{Proposition}
\newtheorem{cor}[defi]{Corollary}
\newtheorem{rmk}[defi]{Remark}

\newcommand{\K}{\mathds{k}}
\newcommand{\Z}{\mathbb{Z}}

\newcommand{\F}{\mathbb{F}}
\newcommand{\G}{\mathbf{G}}

\newcommand{\Pp}{\mathcal{P}}
\newcommand{\cH}{\mathcal{H}}

\newcommand{\id}{\operatorname{id}}

\begin{document}
	
	\title[Bicrossed products with Radford algebras]{Bicrossed products of generalized Taft algebras and Radford algebras}
	\author{Rongchuan Xiong}
	\address{Department of Mathematics, Changzhou University, Changzhou 213164, China}
	\email{rcxiong@foxmail.com}
	
	\makeatletter
	\@namedef{subjclassname@2020}{\textup{2020} Mathematics Subject Classification}
	\makeatother
	
	\subjclass[2020]{16T05, 16S40, 16T30}
	\date{}
	
	\begin{abstract}
		Over an algebraically closed field of characteristic $p>0$, we classify
		all matched pairs between a generalized Taft algebra $T_{N,n,\xi}$ and
		the Radford algebra $R$. If $p\nmid N$, every matched pair is trivial,
		and the corresponding bicrossed product is the tensor product Hopf
		algebra. If $p\mid N$, matched pairs are parametrized by
		$\beta\in\mathbb F_p$, and the bicrossed products are described by
		explicit cross relations. Two such bicrossed products are isomorphic as
		Hopf algebras if and only if their parameters are equal; hence there are
		exactly $p$ isomorphism classes in this case.
		
		\bigskip
		\noindent{\bf Keywords:} Bicrossed product; Matched pair; Generalized Taft
		algebra; Radford algebra; Positive characteristic.
	\end{abstract}
	
	\maketitle

	\section{Introduction}\label{sec:intro}
	
	The factorization problem for Hopf algebras asks which Hopf algebras factorize through two given Hopf algebras $A$ and $H$. Equivalently,
	one seeks to determine the matched pairs $(A,H,\triangleright,\triangleleft)$
	and to classify the associated bicrossed products $A\bowtie H$. By
	\cite[Proposition 3.12]{Majid90}, a Hopf algebra $E$ factorizes through $A$
	and $H$ if and only if $E$ is isomorphic to a bicrossed product
	$A\bowtie H$ arising from a matched pair. This construction is rooted in the
	group-theoretic theory of exact factorizations and was developed in the
	Hopf-algebraic setting by Takeuchi \cite{Takeuchi81} and Majid
	\cite{Majid90,Majid95}.
	
	A systematic computational approach to the factorization problem was
	developed by Agore, Bontea, and Militaru \cite{ABM14}. It has been applied to
	several families of Hopf algebras, including bicrossed products of two
	Sweedler algebras \cite{Bontea14}, two Taft algebras \cite{Agore18-Taft},
	Taft algebras with group Hopf algebras \cite{Agore-Nastasescu17}, $H_8$ with
	$H_4$ \cite{Lu-Ning-Wang20}, and generalized Taft algebras with group 
	algebras \cite{Wang-Cheng-Lu22}. Related crossed products were studied in
	\cite{Agore-Bontea-Militaru14,Agore18-Cn}. 
	
	The Taft algebra $T_{n^2}(\xi)$ was introduced by Taft \cite{Taft71} and is
	one of the standard examples of finite-dimensional noncommutative and
	noncocommutative Hopf algebras. When $n$ is prime, every noncommutative noncocommutative pointed Hopf algebra of dimension $n^2$ over an
	algebraically closed field of characteristic zero  is isomorphic to $T_{n^2}(\xi)$ \cite{AS98}. Related generalized Taft algebras, in which the group-like
	generator has order $N$ divisible by $n$, appear in the work of Radford
	\cite{Radford75} and were subsequently studied systematically by Huang,
	Chen, and Zhang \cite{HCZ04}. The same presentation defines a Hopf algebra in
	characteristic $p>0$ provided $(p,n)=1$; this is the setting considered here.
	See also \cite{CHYZ04} for the closely related class of monomial Hopf
	algebras.
	
	In  characteristic $p>0$, Radford \cite{Radford77} constructed a
	$p^2$-dimensional noncommutative and noncocommutative pointed Hopf algebra,
	now commonly called the Radford algebra.  Wang and Wang \cite{WW14} later classified pointed Hopf algebras of
	dimension $p^2$ over algebraically closed fields of characteristic $p$; the
	Radford algebra is the unique noncommutative and noncocommutative type in
	that classification.
	
	Compared with the characteristic-zero case, the factorization problem in
	positive characteristic has received less attention, and positive
	characteristic introduces new constraints. In this paper we classify all
	matched pairs between a generalized Taft algebra $T_{N,n,\xi}$ and the
	Radford algebra $R$ over an algebraically closed field $\K$ of
	characteristic $p>0$. Here $R$ is generated by a group-like element $g$ and a $(1,g)$-skew-primitive element
	$x$ subject to
	\[
	g^p=1,\qquad x^p=x,\qquad gx=xg+g(1-g),
	\]
	while
	$T_{N,n,\xi}$ is generated by a group-like element $G$ and a
	$(1,G)$-skew-primitive element $X$ subject to 
	\[
	G^N=1,\qquad X^n=0,\qquad GX=\xi XG,
	\]
	where $\xi$ is a primitive $n$-th root of unity and $n\mid N$.
	The relations $x^p=x$ and $G^N=1$ force the matched-pair parameter, say
	$\beta$, to lie in the prime field $\F_p$ and to satisfy $N\beta=0$;
	consequently, the classification splits into two cases according to whether
	$p$ divides $N$ or not. The main result is as follows.
	
	\begin{thm}\label{thm:main}
		Let $T=T_{N,n,\xi}$ and let $R$ be the Radford algebra.
		
		\medskip
		\noindent\textbf{Case 1: $p\nmid N$.}
		Every matched pair $(T,R,\triangleright,\triangleleft)$ is trivial:
		\[
		u\triangleright a=\varepsilon_R(u)a,
		\qquad
		u\triangleleft a=\varepsilon_T(a)u
		\qquad (u\in R,\ a\in T).
		\]
		Hence the bicrossed product is the tensor product Hopf algebra $T\otimes R$.
		
		\medskip
		\noindent\textbf{Case 2: $p\mid N$.}
		Matched pairs are parametrized by $\beta\in\F_p$. For the matched pair with
		parameter $\beta$, the actions on generators are
		\[
		g\triangleright G=G,\qquad g\triangleright X=X,
		\qquad x\triangleright G=0,\qquad x\triangleright X=\beta X,
		\]
		and
		\[
		g\triangleleft G=g,\qquad g\triangleleft X=0,
		\qquad x\triangleleft G=x+\beta(1-g),
		\qquad x\triangleleft X=0.
		\]
		The corresponding bicrossed product $T\bowtie_\beta R$ is generated by
		$G,X,g,x$ with the defining relations of $T$ and $R$ together with
		\[
		gG=Gg,\qquad gX=Xg,\qquad
		xG=Gx+\beta G(1-g),\qquad xX=Xx+\beta X.
		\]
		Moreover,
		\[
		T\bowtie_\beta R\cong T\bowtie_{\beta'}R
		\quad\text{as Hopf algebras}
		\quad\Longleftrightarrow\quad
		\beta=\beta'.
		\]
		Thus, when $p\mid N$, there are exactly $p$ isomorphism classes.
	\end{thm}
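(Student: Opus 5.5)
The plan follows the Agore--Bontea--Militaru method \cite{ABM14}. A matched pair $(T,R,\triangleright,\triangleleft)$ is equivalent to a Hopf algebra structure on $T\otimes R$ in which $T$ and $R$ are Hopf subalgebras and the multiplication $E=T\otimes R\to E$ is bijective. So I will determine the possible cross products $u\cdot a=(u_{(1)}\triangleright a_{(1)})(u_{(2)}\triangleleft a_{(2)})$ for $u\in\{g,x\}$ and $a\in\{G,X\}$.

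\emph{Group-likes.} Since $g$ and $G$ are group-like, $g\triangleright G$ and $g\triangleleft G$ are group-likes in $T$ and $R$ respectively. By the normalization axioms, $g\mapsto g\triangleright -$ induces a group action on $G(T)=\langle G\rangle\cong\Z_N$, and $\triangleleft$ induces an action of $\langle G\rangle$ on $G(R)=\langle g\rangle\cong\Z_p$. The compatibility $\Delta(g\cdot G)=(g\cdot G)\otimes(g\cdot G)$, together with the requirement that the actions preserve the skew-primitive spaces of $(1,G)$ and $(1,g)$ type, should force $g\triangleright G=G$ and $g\triangleleft G=g$. The reason is that $g\triangleright-$ must send $(1,G)$-primitives to $(1,g\triangleright G)$-primitives, and the only nontrivial skew-primitive data in $T$ (resp.\ $R$) sit over $G$ (resp.\ $g$); a coradical/order argument then pins these down. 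Comparing coproducts of the cross relations then gives $g\triangleright X=\lambda X$, $g\triangleleft X=0$ and $x\triangleleft X=0$.

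\emph{Skew-primitives.} Write $x\triangleright G=\mu(1-G)$ and $x\triangleleft G=\gamma x+\beta(1-g)$, the general forms permitted by coassociativity. Imposing the relations of $R$ ($gx=xg+g(1-g)$, $x^p=x$, $g^p=1$) and of $T$ ($GX=\xi XG$, $X^n=0$, $G^N=1$) on the cross relations should force $\lambda=1$, $\mu=0$, $\gamma=1$, and $x\triangleright X=\beta X$ with the same $\beta$. The decisive computations are the following. Applying $x^p=x$ to $x\triangleleft G$ gives $\beta^p=\beta$, so $\beta\in\F_p$. Applying $G^N=1$ iteratively gives $x\triangleleft G^k=x+k\beta(1-g)$, hence $N\beta=0$ in $\K$. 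Thus $\beta=0$ when $p\nmid N$, which yields the trivial pair and the tensor product of Case~1, while $\beta\in\F_p$ is arbitrary when $p\mid N$. For the converse, I will verify directly that the algebra presented by the displayed relations is a Hopf algebra of dimension $Nnp^2$. Checking that $\Delta$ respects the relations is routine, and the PBW-type basis $G^iX^jg^kx^l$ follows from the diamond lemma, since the rewriting rules for $xG$ and $xX$ resolve all overlaps.

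\emph{Isomorphism classes.} Any Hopf isomorphism $\phi\colon T\bowtie_\beta R\to T\bowtie_{\beta'}R$ preserves the group of group-likes $\langle G\rangle\times\langle g\rangle$ and the skew-primitive spaces. Hence $\phi(G)=G^ag^b$, $\phi(g)=G^cg^d$, $\phi(x)=s x+t(1-\phi(g))$ and $\phi(X)=rX+\cdots$, up to a careful determination of which group-likes admit nontrivial skew-primitives. Because $g$ must map to an element $h$ admitting a non-nilpotent primitive $y$ with $y^p=y$ and $h y=yh+h(1-h)$, I expect $\phi(g)=g$ and $\phi(x)=x+t(1-g)$. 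Moreover $x^p=x$ forces the scalar $s$ in $\phi(x)=sx+\cdots$ to satisfy $s\in\F_p^\times$, and the relation $gx=xg+g(1-g)$ forces $s=1$. Then the invariant $\beta$ is read off from $[x,X]=\beta X$, since $\phi$ sends $X$ to a multiple of $X$ and $[t(1-g),X]=0$. This gives $\beta=\beta'$. \textbf{Main obstacle:} the first step, namely rigorously excluding nontrivial actions on group-likes (for example $g\triangleright G=G^m$ with $m^p\equiv1\pmod N$) and nonzero $\mu$ and $g\triangleleft X$. I would attack it by writing the general ansatz and exhausting the compatibility equations of \cite{ABM14} on generators, using $x^p=x$ to kill mixed terms.
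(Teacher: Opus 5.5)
\textbf{Main gap: where $\beta\in\F_p$ comes from.} Your outline follows the paper's architecture. You pin down the actions on generators via skew-primitive spaces and coproduct comparison, get $N\beta=0$ from $G^N=1$, and read off $\beta$ from $[x,X]=\beta X$ after showing $\Phi(g)=g$ and $\Phi(X)\in\K^\times X$. But the step you call decisive for $\beta\in\F_p$ is false.

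Imposing $x^p=x$ on $x\triangleleft G$ gives no condition. The map $g\mapsto g$, $x\mapsto x+\beta(1-g)$ respects $x^p=x$ for \emph{every} $\beta\in\K$:
\begin{itemize}
\item For $\beta=m\in\F_p$ one has $x+m(1-g)=g^mxg^{-m}$, so $(x+m(1-g))^p=x+m(1-g)$.
\item In the noncommutative expansion of $(x+\beta(1-g))^p$, the coefficient of $\beta^p$ is $(1-g)^p=1-g^p=0$.
\item Hence $(x+\beta(1-g))^p-x-\beta(1-g)$ is a polynomial in $\beta$ of degree less than $p$, with coefficients in $R$, vanishing at the $p$ points of $\F_p$. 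So it is identically zero.
\end{itemize}
Since $N\beta=0$ is vacuous when $p\mid N$, nothing in your argument as written excludes $\beta\in\K\setminus\F_p$ in Case 2. You would therefore not get exactly $p$ classes. The restriction comes from the left action, as in the paper: from $x\triangleright X=\beta X$ and $x^p=x$ one gets $\beta X=x^p\triangleright X=\beta^pX$. Equivalently, in $E_\beta$, $x^pX=X(x+\beta)^p=X(x+\beta^p)$ must equal $xX=X(x+\beta)$.

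\textbf{Smaller points.}
\begin{itemize}
\item Once $g\triangleright G=G$, the coalgebra condition gives $x\triangleright G\in\Pp_{G,G}(T)=0$ directly. So $\mu(1-G)$ is not the permitted form.
\item Coproduct comparison alone only gives $g\triangleright X=\alpha(G-1)+\lambda X$ and $x\triangleleft X=\alpha'(g-1)$. The extra terms are killed by MP1 together with $GX=\xi XG$.
\item The obstacle you flag is settled by the idea you already sketch, which is exactly the paper's argument. Since $g^p=1$, $g\triangleright-$ is a coalgebra automorphism of $T$, so it preserves $\K[\G(T)]$. If $g\triangleright G=G^r$ with $r\ne1$, then $g\triangleright X\in\Pp_{1,G^r}(T)=\K(G^r-1)\subseteq\K[\G(T)]$, contradicting injectivity.
\item After that, MP3 on $(g,X)$ gives $g\triangleleft G=g$ and $g\triangleleft X=0$. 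MP1 then makes $g\triangleright-$ an algebra automorphism, so $F^p=\id$ forces $\lambda=1$ and $GX=\xi XG$ forces $\alpha=0$. No appeal to $x^p=x$ is needed there.
\end{itemize}

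\textbf{Your sufficiency route.} Presenting $E_\beta$, checking $\Delta$ on the relations, getting the basis $G^iX^jg^kx^l$ from the diamond lemma, and invoking Majid's factorization theorem is a legitimate alternative. The paper instead checks the module-coalgebra and MP1--MP3 axioms directly on PBW bases. Carried out honestly, your route would have exposed the error above:
\begin{itemize}
\item the overlap $x^pX$ resolves only for $\beta\in\F_p$;
\item the overlap $x^pG$ resolves for all $\beta$, by the identity just proved;
\item the overlap $xG^N$ requires $N\beta=0$.
\end{itemize}
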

	
	The bicrossed products studied here also arise as a special case of
	liftings of the quantum-plane in \cite{Xpq2}; see
	Remark~\ref{rmk:quantum-plane}. The proof first determines the actions involving $g$ and $x$ by using the
	matched-pair identities on skew-primitive elements; this yields the
	parameter restrictions. The matched-pair axioms are then verified on PBW
	bases. Finally, the isomorphism classes are distinguished by the
	dimensions of skew-primitive spaces and the cross relations. 
	
	The paper is organized as follows. Section~\ref{sec:prelim} recalls the 
	necessary definitions and determines the relevant skew-primitive spaces. 
	Section~\ref{sec:first} determines the actions involving the group-like 
	generator $g$. Section~\ref{sec:second} determines the actions involving 
	$x$ and derives the parameter condition. Section~\ref{sec:exist} proves 
	sufficiency by checking all matched-pair axioms on PBW bases. 
	Section~\ref{sec:isom} proves the isomorphism classification.

	\section{Preliminaries}\label{sec:prelim}
	
	Throughout, $\K$ is an algebraically closed field of characteristic $p>0$.
	All vector spaces, algebras, coalgebras, and tensor products are taken over
	$\K$ unless otherwise stated. For a Hopf algebra $H$, we use the standard
	Sweedler notation
	\[
	\Delta(h)=h_{(1)}\otimes h_{(2)}
	\]
	for the comultiplication, and denote the counit and antipode by
	$\varepsilon$ and $S$, respectively. The set of group-like elements of $H$ is
	\[
	\G(H)=\{h\in H\mid h\ne0,\ \Delta(h)=h\otimes h\}.
	\]
	For group-like elements $u,v\in H$, the space of $(u,v)$-skew primitive
	elements is
	\[
	\mathcal P_{u,v}(H)=\{z\in H\mid \Delta(z)=z\otimes u+v\otimes z\}.
	\]
	In particular, $\Pp(H):=\Pp_{1,1}(H)$ is the space of primitive elements.
	
	Let $N\ge n\ge2$ with $n\mid N$ and $(p,n)=1$, and let $\xi\in\K$ be a
	primitive $n$-th root of unity. For an integer $m$ with $0\le m<n$, the
	quantum integer and quantum binomial coefficient are defined by
	\[
	[m]_{\xi}=\frac{\xi^m-1}{\xi-1},
	\qquad
	\binom{m}{k}_{\xi}
	=\frac{[m]_{\xi}[m-1]_{\xi}\cdots[m-k+1]_{\xi}}
	{[k]_{\xi}[k-1]_{\xi}\cdots[1]_{\xi}}
	\quad(0\le k\le m).
	\]
	Since $\xi$ has order $n$ and $m<n$, all quantum integers
	$[1]_{\xi},\dots,[m]_{\xi}$ are nonzero.

	\subsection{Matched pairs and bicrossed products}
	A matched pair of Hopf algebras $(A,H,\triangleright,\triangleleft)$ consists
	of linear maps
	\[
	\triangleright:H\otimes A\to A,
	\qquad
	\triangleleft:H\otimes A\to H
	\]
	such that $A$ is a left $H$-module coalgebra and $H$ is a right
	$A$-module coalgebra. Thus for all $u,v\in H$ and $a,b\in A$, 
	\[
	1_H\triangleright a=a,
	\qquad
	(uv)\triangleright a=u\triangleright(v\triangleright a),
	\]
	\[
	u\triangleleft1_A=u,
	\qquad
	(u\triangleleft a)\triangleleft b=u\triangleleft(ab),
	\]
	and
	\[
	\Delta_A(u\triangleright a)
	=\sum u_{(1)}\triangleright a_{(1)}
	\otimes u_{(2)}\triangleright a_{(2)},
	\qquad
	\varepsilon_A(u\triangleright a)
	=\varepsilon_H(u)\varepsilon_A(a),
	\]
	\[
	\Delta_H(u\triangleleft a)
	=\sum u_{(1)}\triangleleft a_{(1)}
	\otimes u_{(2)}\triangleleft a_{(2)},
	\qquad
	\varepsilon_H(u\triangleleft a)
	=\varepsilon_H(u)\varepsilon_A(a).
	\]
	The unit compatibilities are
	\[
	u\triangleright1_A=\varepsilon_H(u)1_A,
	\qquad
	1_H\triangleleft a=\varepsilon_A(a)1_H,
	\]
	and the matched-pair identities are, 
	\begin{align}
		u\triangleright(ab)
		&=\sum (u_{(1)}\triangleright a_{(1)})
		\bigl((u_{(2)}\triangleleft a_{(2)})\triangleright b\bigr),
		\tag{MP1}\label{eq:MP1}\\
		(uv)\triangleleft a
		&=\sum
		\bigl(u\triangleleft(v_{(1)}\triangleright a_{(1)})\bigr)
		\bigl(v_{(2)}\triangleleft a_{(2)}\bigr),
		\tag{MP2}\label{eq:MP2}\\
		\sum u_{(1)}\triangleleft a_{(1)}\otimes
		u_{(2)}\triangleright a_{(2)}
		&=\sum u_{(2)}\triangleleft a_{(2)}\otimes
		u_{(1)}\triangleright a_{(1)}.
		\tag{MP3}\label{eq:MP3}
	\end{align}
	
	The bicrossed product $A\bowtie H$ has underlying vector space $A\otimes H$,
	tensor-product coalgebra
	\[
	\Delta(a\bowtie u)
	=\sum(a_{(1)}\bowtie u_{(1)})\otimes
	(a_{(2)}\bowtie u_{(2)}),
	\]
	and multiplication
	\[
	(a\bowtie u)(b\bowtie v)
	=\sum a(u_{(1)}\triangleright b_{(1)})
	\bowtie (u_{(2)}\triangleleft b_{(2)})v.
	\]
	We shall suppress the symbol $\bowtie$ when no confusion can arise.

	\subsection{Generalized Taft algebras}
	
	The generalized Taft algebra $T=T_{N,n,\xi}$ is generated by $G,X$ with
	relations
	\[
	G^N=1,\qquad X^n=0,\qquad GX=\xi XG,
	\]
	and Hopf algebra structure
	\[
	\Delta(G)=G\otimes G,\qquad \varepsilon(G)=1,
	\qquad S(G)=G^{-1},
	\]
	\[
	\Delta(X)=X\otimes1+G\otimes X,\qquad
	\varepsilon(X)=0,\qquad S(X)=-G^{-1}X.
	\]
	The monomials
	\[
	\{G^iX^j\mid 0\le i<N,\ 0\le j<n\}
	\]
	form a basis; in particular, $\dim T=Nn$. The Hopf algebra is pointed, with
	group-like elements
	\[
	\G(T)=\{G^i\mid 0\le i<N\}.
	\]
	
	\begin{pro}\label{prop:primitive-T}
		For $1\le r<N$,
		\[
		\Pp_{1,G^r}(T)=
		\begin{cases}
			\K(G-1)\oplus\K X,&r=1,\\
			\K(G^r-1),&r\ne1.
		\end{cases}
		\]
	\end{pro}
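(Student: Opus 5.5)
We compute $\Pp_{1,G^r}(T)$ directly in the PBW basis $\{G^iX^j\}$, using that $\Delta$ preserves a grading. Since $GX=\xi XG$ and $X^n=0$ are homogeneous in $X$, the algebra $T$ is graded by $X$-degree, $T=\bigoplus_{j=0}^{n-1}T(j)$ with $T(j)=\bigoplus_i\K G^iX^j$. The comultiplication is compatible with this grading, since $\Delta(X)=X\otimes1+G\otimes X$. Concretely, the quantum binomial formula for $(X\otimes1)(G\otimes X)=\xi^{-1}(G\otimes X)(X\otimes1)$ gives
\[
\Delta(G^iX^j)=\sum_{k=0}^{j}\binom{j}{k}_{\xi^{\pm1}}G^{i+k}X^{j-k}\otimes G^{i}X^{k},
\]
with the appropriate sign convention on $\xi$; only the nonvanishing of the coefficients matters. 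Each coefficient is nonzero for $j<n$, by the remark on quantum integers above.

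The condition $\Delta(z)=z\otimes1+G^r\otimes z$ is homogeneous for the bigrading of $T\otimes T$. Hence $z$ is $(1,G^r)$-skew primitive if and only if each graded component $z_j\in T(j)$ is. We therefore treat each degree $j$ separately.

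\textbf{Degree $0$.} Here $z_0=\sum_i a_iG^i\in\K[\G(T)]$. Comparing coefficients of $G^i\otimes G^k$ forces $z_0\in\K(G^r-1)$, since the group-likes are linearly independent.

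\textbf{Degree $j\ge1$.} Write $z_j=\sum_i a_iG^iX^j$. The part of $\Delta(z_j)$ lying in $T(j)\otimes T(0)$ is $\sum_i a_iG^iX^j\otimes G^i$. This must equal $z_j\otimes1$, so $a_i=0$ unless $i=0$, giving $z_j=aX^j$. Symmetrically, the part in $T(0)\otimes T(j)$ is $\sum_i a_i G^{i+j}\otimes G^iX^j$, which must equal $G^r\otimes z_j$. For $z_j=aX^j$ this forces $G^j=G^r$, i.e. $j\equiv r \pmod N$.

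If $j\ge2$, the middle term $k=1$, namely $a[j]_\xi G X^{j-1}\otimes X$ up to the sign convention, must vanish. Since $[j]_\xi\neq0$, this gives $a=0$. So the only nonzero component in positive degree occurs for $j=1$, and then $r\equiv1$, i.e. $r=1$. Directly, $X$ is $(1,G)$-skew primitive.

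Combining the degrees gives $\Pp_{1,G}(T)=\K(G-1)\oplus\K X$ and $\Pp_{1,G^r}(T)=\K(G^r-1)$ for $r\ne1$. The sum is direct because the two summands lie in different degrees. The only step needing care is confirming that the quantum binomial coefficients are nonzero for $j<n$, which is already recorded above; the rest is coefficient comparison. Note that $n\ge2$ together with $j<n\le N$ ensures that $j=1$ is the only positive degree that can match $r$ modulo $N$ without invoking the middle terms.
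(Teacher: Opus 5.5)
Your proof is correct and uses the same ingredients as the paper's proof. The nonvanishing $k=1$ quantum-binomial term eliminates $X$-degree $\ge 2$, comparison of the extreme terms forces the degree-one part to be a multiple of $X$ that occurs only for $r=1$, and the degree-zero part is solved in $\K\langle G\rangle$; the only difference is that you first split $z$ into $X$-homogeneous components (valid because $\Delta$ respects the total $X$-grading), which replaces the paper's maximal-degree descent and makes explicit why lower-degree terms cannot interfere.
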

	\begin{proof}
		For $0\le j<n$, the quantum binomial formula gives
		\begin{equation}\label{eq:taft-coproduct}
			\Delta(G^iX^j)
			=\sum_{s=0}^j
			\binom{j}{s}_{\xi}\xi^{-s(j-s)}
			G^{i+s}X^{j-s}\otimes G^iX^s.
		\end{equation}
		Because $j<n$ and $\xi$ has order $n$, every quantum integer
		$[m]_{\xi}$ with $1\le m\le j$ is nonzero. Hence
		$\binom{j}{s}_{\xi}\ne0$ for $0\le s\le j$.
		
		Let
		$z=\sum_{i,j}a_{i,j}G^iX^j\in\Pp_{1,G^r}(T)$ and suppose that $m\ge2$ is the
		largest $X$-degree occurring in $z$. In \eqref{eq:taft-coproduct}, the term
		with $s=1$ contributed by $G^iX^m$ is a nonzero multiple of
		\[
		G^{i+1}X^{m-1}\otimes G^iX.
		\]
		Both tensor factors have positive $X$-degree. Such a tensor does not occur in
		$z\otimes1+G^r\otimes z$. Moreover, among terms of total $X$-degree $m$, the tensor
		$G^{i+1}X^{m-1}\otimes G^iX$ can arise only from the $s=1$ term of
		$\Delta(G^iX^m)$; indeed, if it comes from $\Delta(G^{i'}X^m)$,
		then comparison of the second tensor factor forces $s=1$, and then
		comparison of the first tensor factor forces $i'=i$. Since the PBW tensors
		$G^aX^b\otimes G^cX^d$ are linearly independent, every $a_{i,m}$ must
		vanish, a contradiction. Descending on the maximal $X$-degree shows that
		$z$ has $X$-degree at most one.
		
		Write
		\[
		z=\sum_i a_iG^iX+\sum_i b_iG^i.
		\]
		Since
		\[
		\Delta(G^iX)=G^iX\otimes G^i+G^{i+1}\otimes G^iX,
		\]
		comparison of the terms with $X$ in the first tensor factor gives $a_i=0$
		for $i\ne0$. If $a_0\ne0$, comparison of the terms with $X$ in the second
		tensor factor gives $G=G^r$, hence $r=1$. Therefore an $X$-term occurs only
		for $r=1$, and then it is a scalar multiple of $X$.
		
		It remains to solve the equation inside the group algebra $\K\langle G\rangle$.
		A direct comparison of the tensors $G^i\otimes G^j$ gives
		\[
		\Pp_{1,G^r}(\K\langle G\rangle)=\K(G^r-1)
		\qquad (1\le r<N).
		\]
		This proves the stated formula.
	\end{proof}
	
	\begin{lem}\label{lem:P11-T}
		\[
		\Pp_{1,1}(T)=0
		\qquad\text{and}\qquad
		\Pp_{G,G}(T)=0.
		\]
	\end{lem}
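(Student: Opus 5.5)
Both claims follow by the same leading-degree argument used in the proof of Proposition~\ref{prop:primitive-T}, now with $(u,v)=(1,1)$ and $(u,v)=(G,G)$ in place of $(1,G^r)$. For the second claim, the cleanest route is to reduce it to the first. Left multiplication by $G^{-1}$ is a bijection $\Pp_{G,G}(T)\to\Pp_{1,1}(T)$: if $\Delta(z)=z\otimes G+G\otimes z$, then
\[
\Delta(G^{-1}z)=(G^{-1}\otimes G^{-1})\Delta(z)=G^{-1}z\otimes 1+1\otimes G^{-1}z .
\]
So it suffices to prove $\Pp_{1,1}(T)=0$.

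Let $z=\sum_{i,j}a_{i,j}G^iX^j\in\Pp_{1,1}(T)$. First I would run the descent on the maximal $X$-degree exactly as in Proposition~\ref{prop:primitive-T}. The argument given there never used the particular group-likes $1,G^r$ when eliminating degree $m\ge2$. The tensor $G^{i+1}X^{m-1}\otimes G^iX$ has positive $X$-degree in both factors, so it cannot occur in $z\otimes1+1\otimes z$. This forces $a_{i,m}=0$, hence $z=\sum_i a_iG^iX+\sum_i b_iG^i$.

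Next I would compare the terms carrying $X$ in each tensor factor. The expansion $\Delta(G^iX)=G^iX\otimes G^i+G^{i+1}\otimes G^iX$ puts $X$ in the first factor through $\sum_i a_iG^iX\otimes G^i$. This must equal $z_X\otimes 1$, where $z_X=\sum_i a_iG^iX$ is the $X$-part of $z$, so $a_i=0$ for $i\neq0$. The term with $X$ in the second factor is then $a_0\,G\otimes X$, which must equal $a_0\,1\otimes X$. Since $G\ne1$ (as $N\ge n\ge2$), this gives $a_0=0$.

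Finally, for $z=\sum_i b_iG^i$ in the group algebra, the primitivity condition reads
\[
\sum_i b_iG^i\otimes G^i=\sum_i b_i(G^i\otimes1+1\otimes G^i).
\]
Comparing coefficients of the independent tensors $G^i\otimes G^j$ gives $b_i=0$ for $i\ne0$ from the diagonal terms. The remaining coefficient satisfies $b_0=2b_0$ at $1\otimes1$, so $b_0=0$. Hence $\Pp_{1,1}(T)=0$, and by the reduction above also $\Pp_{G,G}(T)=0$.

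No step here is a real obstacle. The only point needing care is checking that the degree-descent of Proposition~\ref{prop:primitive-T} transfers verbatim to the pair $(1,1)$, since it relied only on the linear independence of the PBW tensors.
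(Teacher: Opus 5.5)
Your proof is correct and follows essentially the same route as the paper: the maximal $X$-degree descent from Proposition~\ref{prop:primitive-T}, comparison of the terms carrying $X$ in each tensor factor to kill the $a_i$, a direct group-algebra comparison for the $b_i$, and the reduction $z\mapsto G^{-1}z$ from $\Pp_{G,G}(T)$ to $\Pp_{1,1}(T)$. The only differences are that you state the reduction first and spell out details the paper leaves implicit, namely $G\ne1$ and the step $b_0=2b_0$.
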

	
	\begin{proof}
		Let $z=\sum_{i,j}a_{i,j}G^iX^j$ be primitive. The same maximal
		$X$-degree argument used in Proposition~\ref{prop:primitive-T} eliminates
		all terms of degree at least two. Thus
		\[
		z=\sum_i a_iG^iX+\sum_i b_iG^i.
		\]
		Comparing $G^iX\otimes G^i$ with the right-hand side
		$z\otimes1+1\otimes z$ gives $a_i=0$ for $i\ne0$. For $i=0$, the second
		summand of $\Delta(X)$ is $G\otimes X$, whereas the right-hand side contains
		$1\otimes a_0X$; hence $a_0=0$. We are left with an element of the group
		algebra $\K\langle G\rangle$, and comparison of the group-like tensor basis
		forces all $b_i$ to vanish. Therefore $\Pp_{1,1}(T)=0$.
		
		Finally, if $z\in\Pp_{G,G}(T)$, $G^{-1}z\in\Pp_{1,1}(T)=0$. Hence
		$z=0$, proving $\Pp_{G,G}(T)=0$.
	\end{proof}
	
	\subsection{The Radford algebra}
	
	The Radford algebra is
	\[
	R=\K\langle g,x\mid
	g^p=1,\ x^p=x,\ gx=xg+g(1-g)\rangle,
	\]
	with Hopf algebra structure
	\[
	\Delta(g)=g\otimes g,\qquad \varepsilon(g)=1,
	\qquad S(g)=g^{-1},
	\]
	\[
	\Delta(x)=x\otimes1+g\otimes x,\qquad
	\varepsilon(x)=0,\qquad S(x)=-g^{-1}x.
	\]
	The relation may also be written as
	\[
	gxg^{-1}=x+1-g.
	\]
	By the standard PBW reduction for the Radford algebra
	\cite{Radford77,WW14}, using $xg=gx-g+g^2$, the monomials
	\[
	\{g^ix^j\mid0\le i,j<p\}
	\]
	form a basis of $R$. Thus $\dim R=p^2$. The Hopf algebra
	is pointed, with
	\[
	\G(R)=\{g^i\mid0\le i<p\}.
	\]
	
	\begin{lem}\label{lem:primitive-R}
		For every $s\in\Z/p\Z$,
		\[
		\Pp_{1,g^s}(R)=
		\begin{cases}
			0,&s=0,\\
			\K(g-1)\oplus\K x,&s=1,\\
			\K(g^s-1),&s\ne0,1.
		\end{cases}
		\]
		Moreover,
		\[
		\Pp_{g^s,g^s}(R)=0
		\qquad\text{for every }s\in\Z/p\Z.
		\]
	\end{lem}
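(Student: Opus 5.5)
The plan is to mirror the proofs of Proposition~\ref{prop:primitive-T} and Lemma~\ref{lem:P11-T}, with one extra ingredient. In $R$ the commutation relation is not homogeneous, so the coproduct of a PBW monomial is no longer a clean $q$-binomial sum. First I compute $\Delta(g^ix^j)$ for $0\le j<p$. Because $\Delta(x)=x\otimes1+g\otimes x$ and $x\otimes 1$, $g\otimes x$ do not $q$-commute, I will instead expand
\[
\Delta(x)^j=(x\otimes1+g\otimes x)^j
\]
and reduce each word to PBW form using $xg=gx-g+g^2$. Reordering only moves powers of $g$ and does not raise the $x$-degree, so I expect
\[
\Delta(g^ix^j)=\sum_{s=0}^{j}\binom{j}{s}\,g^{i+s}x^{j-s}\otimes g^ix^s+(\text{terms of lower bidegree}).
\]
Here ``lower bidegree'' means total $x$-degree at most $j-1$. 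The ordinary binomial coefficients $\binom{j}{s}$ are nonzero in $\K$ for $j<p$. The leading part is computed in the associated graded algebra for the $x$-degree filtration, where $g$ and $x$ commute.

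Next, let $z=\sum a_{i,j}g^ix^j\in\Pp_{u,v}(R)$ with $u,v$ group-like, and let $m\ge2$ be the maximal $x$-degree occurring in $z$. For each $i$ with $a_{i,m}\neq 0$, consider the tensor $g^{i+1}x^{m-1}\otimes g^ix$. It has total degree $m$ and both factors have positive $x$-degree. It therefore cannot occur in $z\otimes u+v\otimes z$, and it cannot be produced by the lower-order corrections. Among degree-$m$ terms it arises only from $a_{i,m}g^ix^m$, with coefficient $m\,a_{i,m}\neq0$. Hence $a_{i,m}=0$ for all $i$, and descending on $m$ shows that $z$ has $x$-degree at most one.

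Then write $z=\sum_i a_ig^ix+\sum_i b_ig^i$ and use $\Delta(g^ix)=g^ix\otimes g^i+g^{i+1}\otimes g^ix$. For $\Pp_{1,g^s}$, comparing the terms with $x$ in the first tensor factor forces $a_i=0$ for $i\ne0$. If $a_0\ne0$, comparing the terms with $x$ in the second factor forces $g^s=g$, that is $s=1$. The remaining group-algebra part satisfies $\Pp_{1,g^s}(\K\langle g\rangle)=\K(g^s-1)$, which is $0$ when $s=0$. This gives all three cases of the formula.

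For $\Pp_{g^s,g^s}(R)$, multiplying by $g^{-s}$ carries it isomorphically onto $\Pp_{1,1}(R)$, which is $0$ by the case $s=0$. This is the same argument as in Lemma~\ref{lem:P11-T}.

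The main obstacle is the first step. I must justify rigorously that rewriting $\Delta(x)^j$ into PBW form never produces a term of full degree $m$ that cancels the $s=1$ leading tensor. I would argue via the filtration by $x$-degree on $R\otimes R$: its associated graded is commutative in $g,x$, and the PBW basis is compatible with it. Alternatively, I could argue directly that the relation $xg=gx-g+g^2$ strictly lowers $x$-degree in the correction terms.
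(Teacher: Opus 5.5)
Your proposal is correct and is essentially the paper's proof: it uses the same leading-term formula for $\Delta(g^ix^m)$ via the $x$-degree filtration, the same elimination of $x$-degree $\ge 2$ through the bidegree-$(m-1,1)$ tensor $g^{i+1}x^{m-1}\otimes g^ix$ with coefficient $m\,a_{i,m}$, the same degree-one and group-algebra comparisons, and the same reduction of $\Pp_{g^s,g^s}(R)$ to $\Pp_{1,1}(R)$ by multiplying with $g^{-s}$. The step you flag as the main obstacle is handled in the paper exactly as you suggest: since $xg-gx=g^2-g$ has lower $x$-degree, $x\otimes1$ and $g\otimes x$ commute in the associated graded of $R\otimes R$, so lower-order corrections cannot affect the coefficient of a PBW tensor of total degree $m$.
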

	\begin{proof}
		Give $g$ degree $0$ and $x$ degree $1$. The relation
		\[
		xg=gx+(g^2-g)
		\]
		shows that commuting $x$ past a power of $g$ preserves the leading
		$x$-degree and creates only lower-degree correction terms. Consequently, for
		$0\le m<p$,
		\begin{equation}\label{eq:radford-leading}
			\Delta(g^ix^m)
			=\sum_{k=0}^{m}\binom{m}{k}
			g^{i+k}x^{m-k}\otimes g^ix^k
			+\{\text{terms of total $x$-degree $<m$}\}.
		\end{equation}
		
		The extra terms have strictly smaller total $x$-degree and cannot cancel
		the leading tensors in \eqref{eq:radford-leading}.
		
		Let
		\[
		z=\sum_{i,j=0}^{p-1}a_{i,j}g^ix^j\in\Pp_{1,g^s}(R).
		\]
		If $m\ge2$ is the maximal $x$-degree occurring in $z$, then the $k=1$ term
		in \eqref{eq:radford-leading} contains
		\[
		m\,a_{i,m}\,g^{i+1}x^{m-1}\otimes g^ix.
		\]
		Since $1\le m\le p-1$, the scalar $m$ is nonzero in $\K$. The right-hand
		side $z\otimes1+g^s\otimes z$ contains no term in which both tensor factors
		have positive $x$-degree. In the associated graded tensor coalgebra, the
		bidegree-$(m-1,1)$ tensor
		$g^{i+1}x^{m-1}\otimes g^ix$ can arise only from the $k=1$ term attached to
		$g^ix^m$; lower filtered-degree corrections cannot contribute to this
		component. Linear independence of the PBW tensor basis therefore gives
		$a_{i,m}=0$ for every $i$, a contradiction. Descending on $m$ eliminates
		all terms of $x$-degree at least two. If $p=2$, there are no
		PBW terms with degree $2\le j<p$, so the conclusion follows directly.
		
		Hence
		\[
		z=\sum_i a_i g^ix+\sum_i b_i g^i.
		\]
		Using
		\[
		\Delta(g^ix)=g^ix\otimes g^i+g^{i+1}\otimes g^ix,
		\]
		comparison of terms with $x$ in the first tensor factor gives $a_i=0$ for
		$i\ne0$. If $a_0\ne0$, comparison of the terms with $x$ in the second tensor
		factor gives $g=g^s$, hence $s=1$. Therefore the only possible $x$-term is a
		multiple of $x$, and it occurs only when $s=1$.
		
		For the remaining group-algebra part, comparison in the basis
		$\{g^i\otimes g^j\}$ yields
		\[
		\Pp_{1,1}(\K\langle g\rangle)=0,
		\qquad
		\Pp_{1,g^s}(\K\langle g\rangle)=\K(g^s-1)
		\quad(s\ne0).
		\]
		This proves the displayed formula for $\Pp_{1,g^s}(R)$. In particular,
		\[
		\Pp_{1,1}(R)=0.
		\]
		Finally, if $z\in\Pp_{g^s,g^s}(R)$, then
		\[
		\Delta(g^{-s}z)=g^{-s}z\otimes1+1\otimes g^{-s}z,
		\]
		so $g^{-s}z\in\Pp_{1,1}(R)=0$. Hence $z=0$.
	\end{proof}

	\begin{cor}\label{cor:shifted-R}
		For $s\in\Z/p\Z$,
		\[
		\Pp_{g,g^s}(R)=g\Pp_{1,g^{s-1}}(R).
		\]
	\end{cor}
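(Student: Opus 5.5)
The plan is to reduce the corollary to Lemma~\ref{lem:primitive-R} by left multiplication with the group-like element $g^{-1}=g^{p-1}$. Left multiplication by a group-like is a linear automorphism of $R$, with inverse left multiplication by $g$. Since $\Delta$ is an algebra map, it interacts with skew-primitivity in a controlled way, so both inclusions can be checked directly.

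First I will show that $g^{-1}\Pp_{g,g^s}(R)\subseteq \Pp_{1,g^{s-1}}(R)$. Take $z\in\Pp_{g,g^s}(R)$, so that $\Delta(z)=z\otimes g+g^s\otimes z$. Multiplicativity of $\Delta$ together with $\Delta(g^{-1})=g^{-1}\otimes g^{-1}$ gives
\[
\Delta(g^{-1}z)=g^{-1}z\otimes g^{-1}g+g^{-1}g^s\otimes g^{-1}z=g^{-1}z\otimes 1+g^{s-1}\otimes g^{-1}z .
\]
Hence $g^{-1}z\in\Pp_{1,g^{s-1}}(R)$, and therefore $z=g(g^{-1}z)\in g\Pp_{1,g^{s-1}}(R)$.

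For the reverse inclusion, take $w\in\Pp_{1,g^{s-1}}(R)$. The same computation gives
\[
\Delta(gw)=gw\otimes g+g^{s}\otimes gw,
\]
so $gw\in\Pp_{g,g^s}(R)$. The two inclusions together give the stated equality. The exponents should be read in $\Z/p\Z$, which is consistent because $g^p=1$.

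There is no real obstacle here; the only point needing care is the convention for $\Pp_{u,v}$, namely $\Delta(z)=z\otimes u+v\otimes z$. With that convention, left multiplication by $g^{-1}$ shifts both indices by $g^{-1}$, taking $\Pp_{g,g^s}$ to $\Pp_{1,g^{s-1}}$. Combining the corollary with Lemma~\ref{lem:primitive-R} then makes $\Pp_{g,g^s}(R)$ explicit:
\begin{itemize}
\item it is $0$ for $s=1$;
\item it is $\K(g^2-g)\oplus\K gx$ for $s=2$;
\item it is $\K(g^s-g)$ otherwise.
\end{itemize}
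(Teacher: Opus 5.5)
Your proof is correct and is the same as the paper's: left multiplication by $g^{-1}$ is a linear isomorphism $\Pp_{g,g^s}(R)\to\Pp_{1,g^{s-1}}(R)$, and you check both inclusions explicitly using the multiplicativity of $\Delta$. The explicit description of $\Pp_{g,g^s}(R)$ you then read off from Lemma~\ref{lem:primitive-R} is also correct, with exponents taken in $\Z/p\Z$.
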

	
	\begin{proof}
		Multiplication by $g^{-1}$ gives a vector-space isomorphism
		$\Pp_{g,g^s}(R)\to\Pp_{1,g^{s-1}}(R)$.
	\end{proof}
	
	\section{The actions involving \texorpdfstring{$g$}{g}}\label{sec:first}
	
	Let $(T,R,\triangleright,\triangleleft)$ be a matched pair. We first determine
	the action involving the group-like generator $g\in R$.
	
	\begin{pro}\label{prop:first}
		In every matched pair $(T,R,\triangleright,\triangleleft)$,
		\[
		g\triangleright G=G,\qquad g\triangleright X=X,
		\qquad g\triangleleft G=g,\qquad g\triangleleft X=0.
		\]
		Consequently,
		\[
		g\triangleright a=a,
		\qquad
		g\triangleleft a=\varepsilon(a)g
		\qquad(a\in T).
		\]
	\end{pro}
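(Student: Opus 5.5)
The plan is to use the fact that $g$ is group-like in $R$. Then $\Delta(g)=g\otimes g$ makes $a\mapsto g\triangleright a$ a coalgebra endomorphism of $T$, and makes $a\mapsto g\triangleleft a$ satisfy $\Delta_R(g\triangleleft a)=\sum g\triangleleft a_{(1)}\otimes g\triangleleft a_{(2)}$. Since $g^p=1$ and $\triangleright$ is a module action, $g\triangleright-$ is bijective with inverse $g^{p-1}\triangleright-$. So it is a coalgebra automorphism of $T$, and it maps skew-primitive spaces bijectively onto skew-primitive spaces.

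\textbf{Step 1: the images of $G$.} Since $\varepsilon(g\triangleright G)=1$ and $g\triangleright G$ is group-like, we get $g\triangleright G=G^a$ for some $a$. Likewise $g\triangleleft G=g^b$ for some $b$, since group-likes of $R$ are the $g^i$.

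\textbf{Step 2: $a=1$.} From $\Delta(X)=X\otimes1+G\otimes X$ and $g\triangleright1=1$ we get
\[
g\triangleright X\in\Pp_{1,G^a}(T).
\]
Moreover $g\triangleright-$ restricts to an injection
\[
\Pp_{1,G}(T)\hookrightarrow\Pp_{1,G^a}(T).
\]
By Proposition~\ref{prop:primitive-T}, the source is $2$-dimensional. The target is at most $1$-dimensional unless $a=1$: for $a=0$ it is $\Pp_{1,1}(T)=0$ by Lemma~\ref{lem:P11-T}. Hence $a=1$ and
\[
g\triangleright X=\lambda(G-1)+\mu X .
\]
Since $g\triangleright(G-1)=G-1$, injectivity forces $\mu\neq0$.

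\textbf{Step 3: $b=1$ and $g\triangleleft X=0$.} Similarly, $g\triangleleft X\in\Pp_{g,g^b}(R)$. Apply \eqref{eq:MP3} with $u=g$ and $a=X$:
\[
(g\triangleleft X)\otimes(1-G)=(g-g^b)\otimes(g\triangleright X).
\]
The right-hand side has a nonzero $X$-component in the second factor when $g\neq g^b$, while the left-hand side has none. Therefore $g^b=g$, so $b=1$, and then $(g\triangleleft X)\otimes(1-G)=0$ gives $g\triangleleft X=0$.

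\textbf{Step 4: $\lambda=0$ and $\mu=1$.} Apply \eqref{eq:MP1} to both sides of $GX=\xi XG$, using $g\triangleleft G=g$ and $g\triangleleft X=0$. This gives
\[
g\triangleright(GX)=G(g\triangleright X),\qquad g\triangleright(XG)=(g\triangleright X)G .
\]
Hence $G(\lambda(G-1)+\mu X)=\xi(\lambda(G-1)+\mu X)G$. The $X$-parts agree automatically, and the remaining terms give $\lambda(1-\xi)G(G-1)=0$. Since $\xi\neq1$, this yields $\lambda=0$. Then $g^p=1$ gives $\mu^p=1$, and in characteristic $p$ this forces $\mu=1$.

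\textbf{Step 5: extension to all of $T$.} I would prove the consequence by induction on the PBW basis $G^iX^j$. By \eqref{eq:MP1},
\[
g\triangleright(ab)=(g\triangleright a)\bigl((g\triangleleft a_{(1)})\triangleright b\bigr)\text{-type terms},
\]
which, with the formulas on generators, gives $g\triangleright(ab)=(g\triangleright a)(g\triangleright b)$ whenever the formulas hold on $a$. Likewise the right module property gives $g\triangleleft(ab)=(g\triangleleft a)\triangleleft b$. Together these yield $g\triangleright a=a$ and $g\triangleleft a=\varepsilon(a)g$ for all $a\in T$.

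I expect the main obstacle to be Step 2, that is, pinning down $a=1$. The dimension count via bijectivity of $g\triangleright-$ is the key idea, and it needs Proposition~\ref{prop:primitive-T} together with Lemma~\ref{lem:P11-T} to handle the case $a=0$. Once $a=1$ is known, the \eqref{eq:MP3} comparison in Step 3 is routine.
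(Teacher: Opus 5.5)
Your proposal is correct and follows essentially the same route as the paper. Bijectivity of $g\triangleright-$ forces $g\triangleright G=G$ and $g\triangleright X=\lambda(G-1)+\mu X$ with $\mu\ne0$. Then \eqref{eq:MP3} on $(g,X)$ gives $g\triangleleft G=g$ and $g\triangleleft X=0$, and \eqref{eq:MP1}, the relation $GX=\xi XG$ and $g^p=1$ give $\lambda=0$ and $\mu=1$.

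The differences are cosmetic. You exclude $g\triangleright G\ne G$ by the dimension count $\dim\Pp_{1,G}(T)=2>\dim\Pp_{1,G^a}(T)$ rather than via preservation of $\K[\G(T)]$, and you kill $\lambda$ before proving $\mu=1$. One small correction: in Step 5 the Sweedler indices in \eqref{eq:MP1} should read $g\triangleright a_{(1)}$ and $g\triangleleft a_{(2)}$.
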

	
	\begin{proof}
		Since $g\in\G(R)$ , the right module-coalgebra identity gives
		$g\triangleleft G\in \G(R)$; hence there exists $s\in\Z/p\Z$ such that
		\[
		g\triangleleft G=g^s.
		\]
		Since $X\in\Pp_{1,G}(T)$, we have
		\begin{equation}\label{eq:right-gX-coprod}
			\Delta(g\triangleleft X)
			=(g\triangleleft X)\otimes g
			+g^s\otimes(g\triangleleft X).
		\end{equation}
		Thus $g\triangleleft X\in\Pp_{g,g^s}(R)$.
		
		Let
		\[
		F:T\to T,\qquad F(a)=g\triangleright a.
		\]
		The left module property and $g^p=1$ give $F^p=\id_T$. Since the left action
		is a module coalgebra action, $F$ is a bijective coalgebra map. Therefore it
		permutes group-like elements. The unit condition gives $F(1)=1$, so
		\[
		F(G)=G^r
		\]
		for some $1\le r<N$. Applying the left module-coalgebra identity to $X$ gives
		\[
		F(X)\in\Pp_{1,G^r}(T).
		\]
		If $r\ne1$, Proposition~\ref{prop:primitive-T} implies
		$F(X)\in\K(G^r-1)\subseteq\K[\G(T)]$. A coalgebra automorphism permutes the
		group-like elements and therefore preserves their linear span
		$\K[\G(T)]$. Hence $F(X)\in\K[\G(T)]$ would imply
		$X\in\K[\G(T)]$, contradicting the PBW basis. Consequently,
		\[
		r=1.
		\]
		Thus
		\begin{equation}\label{eq:F-X}
			F(X)=\alpha(G-1)+\lambda X
		\end{equation}
		for some $\alpha,\lambda\in\K$. The same argument shows that $\lambda\ne0$;
		otherwise $F(X)$ would again lie in $\K[\G(T)]$.
		
		We now apply MP3 to $(u,a)=(g,X)$. Since
		$\Delta(g)=g\otimes g$ and $\Delta(X)=X\otimes1+G\otimes X$, we obtain
		\[
		(g\triangleleft X)\otimes1+g^s\otimes F(X)
		=g\otimes F(X)+(g\triangleleft X)\otimes G,
		\]
		or equivalently
		\begin{equation}\label{eq:MP3-gX}
			(g\triangleleft X)\otimes(1-G)
			=(g-g^s)\otimes F(X).
		\end{equation}
		Insert \eqref{eq:F-X}. The left-hand side of \eqref{eq:MP3-gX} has second
		tensor factor in $\K(1-G)$, whereas the right-hand side has $X$-component
		\[
		\lambda(g-g^s)\otimes X.
		\]
		Since $\lambda\ne0$ and $X$ is linearly independent from $1-G$, it follows
		that $g=g^s$ and so $s=1$. Equation \eqref{eq:MP3-gX} then becomes
		\[
		(g\triangleleft X)\otimes(1-G)=0,
		\]
		and $G\ne1$ gives
		\[
		g\triangleleft X=0.
		\]
		
		The right module identity determines the action of $g$ on the PBW basis:
		\[
		g\triangleleft G^i=g,
		\qquad
		g\triangleleft G^iX^j=0\quad(j>0).
		\]
		Therefore
		\begin{equation}\label{eq:g-right-trivial}
			g\triangleleft a=\varepsilon(a)g
			\qquad(a\in T).
		\end{equation}
		
		Now MP1 implies that $F=g\triangleright-$ is an algebra map. Indeed, using
		\eqref{eq:g-right-trivial},
		\[
		\begin{aligned}
			F(ab)
			&=\sum F(a_{(1)})
			\bigl((g\triangleleft a_{(2)})\triangleright b\bigr)\\
			&=\sum F(a_{(1)})\varepsilon(a_{(2)})F(b)
			=F(a)F(b).
		\end{aligned}
		\]
		Thus $F$ is a bialgebra automorphism. Since $F(G)=G$, iterating
		\eqref{eq:F-X} gives
		\[
		F^k(X)
		=\alpha(1+\lambda+\cdots+\lambda^{k-1})(G-1)+\lambda^kX.
		\]
		The equality $F^p=\id$ yields $\lambda^p=1$. In characteristic $p$,
		\[
		t^p-1=(t-1)^p,
		\]
		so the unique solution is $\lambda=1$. Hence
		\[
		F(X)=\alpha(G-1)+X.
		\]
		Finally, applying $F$ to the relation $GX=\xi XG$ yields
		\[
		\alpha(1-\xi)G(G-1)=0.
		\]
		Since $\xi\ne1$ and $G(G-1)\ne0$, we obtain $\alpha=0$. Therefore
		$F=\id_T$, completing the proof.
	\end{proof}
	
	\section{The actions involving \texorpdfstring{$x$}{x}}\label{sec:second}
	
	We now determine the action involving the $(1,g)$-skew-primitive generator $x$.
	
	From the left module-coalgebra identity and Proposition~\ref{prop:first},
	\[
	x\triangleright G\in\Pp_{G,G}(T)=0,
	\qquad
	x\triangleright X\in\Pp_{1,G}(T)
	=\K(G-1)\oplus\K X.
	\]
	Thus
	\begin{equation}\label{eq:x-left-initial}
		x\triangleright G=0,
		\qquad
		x\triangleright X=\alpha(G-1)+\beta X.
	\end{equation}
	Similarly, the right module-coalgebra identity gives
	\[
	x\triangleleft G\in\Pp_{1,g}(R),
	\qquad
	x\triangleleft X\in\Pp_{1,g}(R),
	\]
	so we may write
	\begin{equation}\label{eq:x-right-initial}
		x\triangleleft G=a(g-1)+bx,
		\qquad
		x\triangleleft X=c(g-1)+dx.
	\end{equation}
	
	\subsection{The MP3 comparison}
	
	Apply MP3 to $(u,a)=(x,X)$. Using Proposition~\ref{prop:first} and
	\eqref{eq:x-left-initial}--\eqref{eq:x-right-initial}, the two sides are
	\[
	\begin{aligned}
		\mathrm{LHS}
		&=(x\triangleleft X)\otimes1
		+(x\triangleleft G)\otimes X
		+g\otimes\bigl(\alpha(G-1)+\beta X\bigr),\\
		\mathrm{RHS}
		&=1\otimes\bigl(\alpha(G-1)+\beta X\bigr)
		+x\otimes X+(x\triangleleft X)\otimes G.
	\end{aligned}
	\]
	Put $q=g-1$ and $Q=G-1$. After cancelling the common terms, the identity
	above is equivalent to
	\[
	(aq+bx)\otimes X+q\otimes(\alpha Q+\beta X)
	=x\otimes X+(cq+dx)\otimes Q.
	\]
	The four tensors
	$q\otimes Q$, $q\otimes X$, $x\otimes Q$, and $x\otimes X$ are linearly
	independent. Comparing their coefficients gives
	\[
	c=\alpha,
	\qquad a=-\beta,
	\qquad b=1,
	\qquad d=0.
	\]
	Consequently,
	\begin{equation}\label{eq:x-right-mid}
		x\triangleleft G=x+\beta(1-g),
		\qquad
		x\triangleleft X=\alpha(g-1).
	\end{equation}
	
	\subsection{The MP1 comparison}
	
	Apply MP1 first to $(u,a,b)=(x,G,X)$. Because
	$x\triangleright G=0$ and $(1-g)\triangleright X=0$, we get
	\[
	x\triangleright(GX)
	=G\bigl((x\triangleleft G)\triangleright X\bigr)
	=G(x\triangleright X)
	=\alpha(G^2-G)+\beta GX.
	\]
	Next apply MP1 to $(u,a,b)=(x,X,G)$. The only nonzero contribution is
	\[
	x\triangleright(XG)
	=(x\triangleright X)G
	=\alpha(G^2-G)+\beta XG,
	\]
	because $x\triangleright G=0$ and
	$(x\triangleleft X)\triangleright G=\alpha(g-1)\triangleright G=0$.
	Since $GX=\xi XG$ and the action is linear,
	\[
	x\triangleright(GX)=\xi\,x\triangleright(XG).
	\]
	Thus
	\[
	\alpha(1-\xi)(G^2-G)=0.
	\]
	Since $\xi\ne1$, it follows that $\alpha=0$. Therefore
	\begin{equation}\label{eq:x-actions-generators}
		x\triangleright G=0,
		\qquad x\triangleright X=\beta X,
		\qquad x\triangleleft G=x+\beta(1-g),
		\qquad x\triangleleft X=0.
	\end{equation}
	
	\subsection{The parameter restrictions}
	
	The left module identity and $x^p=x$ imply that the operator
	$D=x\triangleright-$ satisfies $D^p=D$. Applying this equality to $X$ and
	using \eqref{eq:x-actions-generators} gives
	\[
	\beta^pX=\beta X.
	\]
	Hence
	\begin{equation}\label{eq:beta-Fp}
		\beta^p=\beta,
		\qquad\text{so }\beta\in\F_p.
	\end{equation}
	
	Let
	\[
	\rho_G(u)=u\triangleleft G.
	\]
	Then $G^N=1$ gives $\rho_G^N=\id_R$. By
	\eqref{eq:x-actions-generators},
	\[
	\rho_G(g)=g,
	\qquad
	\rho_G(x)=x+\beta(1-g).
	\]
	Linearity therefore gives, for every $k\ge0$,
	\[
	\rho_G^k(x)=x+k\beta(1-g).
	\]
	Taking $k=N$ yields
	\[
	N\beta(1-g)=0.
	\]
	Since $g\ne1$,
	\begin{equation}\label{eq:beta-N}
		N\beta=0.
	\end{equation}
	Thus $\beta=0$ when $p\nmid N$, whereas every $\beta\in\F_p$ satisfies
	\eqref{eq:beta-N} when $p\mid N$.
	
	\subsection{The actions on PBW bases}
	
	Since $g\triangleright G=G$ and $x\triangleright G=0$, it follows that for
	$v\in R$,
	\begin{equation}\label{eq:R-on-G-counit}
		v\triangleright G=\varepsilon(v)G.
	\end{equation}
	Apply MP2 with $a=G$. Using \eqref{eq:R-on-G-counit},
	\[
	\begin{aligned}
		(uv)\triangleleft G
		&=\sum
		\bigl(u\triangleleft(\varepsilon(v_{(1)})G)\bigr)
		(v_{(2)}\triangleleft G)\\
		&=(u\triangleleft G)(v\triangleleft G).
	\end{aligned}
	\]
	Thus the linear operator
	\[
	\rho_G:R\to R,\qquad \rho_G(u)=u\triangleleft G,
	\]
	is an algebra map. The right module identity and $G^N=1$ give
	$\rho_G^N=\id_R$, so $\rho_G$ is an algebra automorphism. Since its values on
	the algebra generators are
	\[
	\rho_G(g)=g,
	\qquad
	\rho_G(x)=x+\beta(1-g),
	\]
	$\rho_G$ is determined by $\beta$. Write $\phi_\beta:=\rho_G$. Then
	\begin{equation}\label{eq:rhoG-phi}
		\phi_\beta(g)=g,
		\qquad
		\phi_\beta(x)=x+\beta(1-g).
	\end{equation}
	
	It remains to determine the operator
	$\rho_X(u)=u\triangleleft X$ on all of $R$. The left module identity gives
	\[
	v\triangleright X=\chi_1(v)X,
	\]
	where $\chi_1(g)=1$ and $\chi_1(x)=\beta$. Applying MP2 to $a=X$ and using
	$\Delta(X)=X\otimes1+G\otimes X$ gives the recursive identity
	\begin{equation}\label{eq:rhoX-recursion}
		\rho_X(uv)
		=\rho_X(u)L_{\chi_1}(v)+\rho_G(u)\rho_X(v),
	\end{equation}
	where
	\[
	L_{\chi_1}(v)=\sum\chi_1(v_{(1)})v_{(2)}.
	\]
	By \eqref{eq:x-actions-generators} and Proposition~\ref{prop:first},
	\[
	\rho_X(1)=\rho_X(g)=\rho_X(x)=0.
	\]
	Since $R$ is generated as an algebra by $g$ and $x$,
	\eqref{eq:rhoX-recursion} implies inductively on word length that
	\begin{equation}\label{eq:rhoX-zero}
		\rho_X=0
		\quad\text{on all of }R.
	\end{equation}
	The right module identity now yields the full right action
	\begin{equation}\label{eq:right-basis-necessity}
		u\triangleleft G^iX^j=
		\begin{cases}
			\phi_\beta^i(u),&j=0,\\
			0,&j>0.
		\end{cases}
	\end{equation}
	
	MP1 now shows that $D=x\triangleright-$ is a derivation of $T$. By
	linearity, it suffices to take $a=G^iX^j$. Writing
	$\Delta(x)=x\otimes1+g\otimes x$, the first term in MP1 contributes
	$(x\triangleright a)b$. In the second term, the factor
	$x\triangleleft a_{(2)}$ is nonzero only for the summand of $\Delta(a)$ whose
	second tensor factor has $X$-degree zero, namely the $s=0$ term. For that
	summand, $a_{(1)}=a$, $a_{(2)}=G^i$, and
	$x\triangleleft G^i=x+i\beta(1-g)$. Since
	$(1-g)\triangleright b=0$ by Proposition~\ref{prop:first}, this term is
	$a(x\triangleright b)$. Hence
	\[
	x\triangleright(ab)
	=(x\triangleright a)b+a(x\triangleright b).
	\]
	Together with $D(G)=0$ and $D(X)=\beta X$, this gives
	\begin{equation}\label{eq:left-basis-necessity}
		x\triangleright(G^iX^j)=j\beta G^iX^j.
	\end{equation}
	The left module identity then determines the action of every $u\in R$.
	
	We summarize the necessity part.
	
	\begin{pro}\label{prop:second}
		In every matched pair $(T,R,\triangleright,\triangleleft)$ there exists a
		unique $\beta\in\F_p$ satisfying $N\beta=0$ such that
		\eqref{eq:x-actions-generators} holds. Equivalently, the full actions are the
		ones described by \eqref{eq:right-basis-necessity} and
		\eqref{eq:left-basis-necessity}, together with
		$g\triangleright a=a$ and $g\triangleleft a=\varepsilon(a)g$.
	\end{pro}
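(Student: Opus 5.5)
The proposition summarizes what has already been derived in Sections~\ref{sec:first} and~\ref{sec:second}, so the proof will assemble those steps in order.

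\emph{Step 1: the actions of $g$.} Proposition~\ref{prop:first} gives $g\triangleright a=a$ and $g\triangleleft a=\varepsilon(a)g$ for all $a\in T$.

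\emph{Step 2: the actions of $x$ on the generators.} The module-coalgebra conditions place $x\triangleright G$ in $\Pp_{G,G}(T)$, which is zero by Lemma~\ref{lem:P11-T}. They place $x\triangleright X$, $x\triangleleft G$ and $x\triangleleft X$ in the two-dimensional skew-primitive spaces of Proposition~\ref{prop:primitive-T} and Lemma~\ref{lem:primitive-R}.
\begin{itemize}
\item Comparing coefficients in \eqref{eq:MP3} for $(x,X)$ expresses $x\triangleleft G$ and $x\triangleleft X$ through the two parameters $\alpha,\beta$ of $x\triangleright X$.
\item Comparing \eqref{eq:MP1} on $GX$ and on $XG$, using $GX=\xi XG$ and $\xi\ne1$, forces $\alpha=0$. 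This yields \eqref{eq:x-actions-generators}.
\end{itemize}
Uniqueness of $\beta$ is immediate, since $\beta$ is the coefficient in $x\triangleright X=\beta X$ and the action is given.

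\emph{Step 3: the constraints on $\beta$.}
\begin{itemize}
\item The operator identity $D^p=D$ for $D=x\triangleright-$ follows from $x^p=x$. Evaluated at $X$ it gives $\beta^p=\beta$, so $\beta\in\F_p$.
\item Iterating $\rho_G$ gives $\rho_G^k(x)=x+k\beta(1-g)$. Combined with $\rho_G^N=\id$ and $g\ne1$, this gives $N\beta=0$.
\end{itemize}

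\emph{Step 4: extension from generators to PBW bases.} This is the step needing care.
\begin{itemize}
\item \eqref{eq:MP2} with $a=G$, together with $v\triangleright G=\varepsilon(v)G$, makes $\rho_G$ an algebra endomorphism. It is therefore the map $\phi_\beta$ determined by its values on $g$ and $x$.
\item \eqref{eq:MP2} with $a=X$ gives the twisted-derivation recursion \eqref{eq:rhoX-recursion}. Since $\rho_X$ vanishes on $1,g,x$, induction on word length gives $\rho_X=0$. The right module identity then yields \eqref{eq:right-basis-necessity}.
\item On the left, the $\triangleleft$-factors in \eqref{eq:MP1} survive only when $a_{(2)}$ is group-like, and $(1-g)$ acts by zero. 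Hence $D$ is an ordinary derivation of $T$, and \eqref{eq:left-basis-necessity} follows from $D(G)=0$ and $D(X)=\beta X$.
\end{itemize}
Since $R$ is generated by $g$ and $x$, the left module property then determines $u\triangleright a$ for all $u\in R$. This gives the stated equivalence with the full actions.

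The main obstacle is the recursion argument for $\rho_X$. Relation \eqref{eq:rhoX-recursion} involves $L_{\chi_1}$ and $\rho_G$ as well as $\rho_X$, so the induction must check that $\rho_X(uv)=0$ whenever $\rho_X(u)=\rho_X(v)=0$. Both terms on the right of \eqref{eq:rhoX-recursion} contain a factor $\rho_X(\cdot)$, so this holds regardless of $L_{\chi_1}$ and $\rho_G$. Because $\rho_X$ is linear and vanishes on $1$, $g$ and $x$, it then vanishes on every monomial in $g,x$, and hence on all of $R$.
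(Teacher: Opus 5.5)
Your proposal is correct and follows the paper's own route. The paper's proof of this proposition simply points back to the derivations in Sections~\ref{sec:first} and~\ref{sec:second}: the $g$-actions, the \eqref{eq:MP3} and \eqref{eq:MP1} comparisons that force $\alpha=0$, the constraints $\beta^p=\beta$ and $N\beta=0$, the recursion \eqref{eq:rhoX-recursion} giving $\rho_X=0$, and the derivation property of $D$, with uniqueness coming from $x\triangleright X=\beta X$ and $X\ne0$; your explicit remark that both terms of \eqref{eq:rhoX-recursion} carry a factor $\rho_X(\cdot)$ is exactly what justifies the paper's word-length induction.
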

	
	\begin{proof}
		Existence of $\beta$ and all displayed formulas were proved above. Uniqueness
		follows from $x\triangleright X=\beta X$ and $X\ne0$.
	\end{proof}
	
	\section{Construction of matched pairs}\label{sec:exist}
	
	We now prove that every parameter obtained in Proposition~\ref{prop:second}
	actually yields a matched pair.
	
	Fix $\beta\in\F_p$ with $N\beta=0$. Let
	\[
	D(G)=0,
	\qquad
	D(X)=\beta X.
	\]
	The assignment extends to a derivation of $T$: it sends the defining
	relations $G^N-1$, $X^n$, and $GX-\xi XG$ into the ideal that they generate.
	Explicitly,
	\begin{equation}\label{eq:D-basis}
		D(G^iX^j)=j\beta G^iX^j.
	\end{equation}
	For every integer $j$, let
	\begin{equation}\label{eq:chi-j}
		\chi_j:R\to\K,
		\qquad
		\chi_j(g)=1,
		\qquad
		\chi_j(x)=j\beta.
	\end{equation}
	Because $j\beta\in\F_p$, the defining relations of $R$ show that $\chi_j$ is
	an algebra character.
	
	Define the left action by
	\begin{equation}\label{eq:left-action-existence}
		u\triangleright G^iX^j
		=\chi_j(u)G^iX^j.
	\end{equation}
	Equivalently, $g$ acts as the identity and $x$ acts as $D$. Since
	$D^p=D$ by \eqref{eq:D-basis}, the operator relations corresponding to the
	relations of $R$ are
	\[
	\id_T^p=\id_T,
	\qquad D^p=D,
	\qquad \id_TD=D\id_T+\id_T(\id_T-\id_T).
	\]
	Thus \eqref{eq:left-action-existence} gives a well-defined left
	$R$-module structure on $T$.
	
	Next define
	\[
	\phi_\beta(g)=g,
	\qquad
	\phi_\beta(x)=x+\beta(1-g).
	\]
	If $m\in\{0,\dots,p-1\}$ represents $\beta\in\F_p$, then the relation
	$gxg^{-1}=x+1-g$ gives
	\[
	\phi_\beta=\operatorname{Ad}_{g^m}.
	\]
	Thus $\phi_\beta$ is a Hopf automorphism of $R$, and
	\[
	\phi_\beta^k(x)=x+k\beta(1-g),
	\qquad
	\phi_\beta^k(g)=g.
	\]
	The condition $N\beta=0$ implies $\phi_\beta^N=\id_R$. Define the right
	action on the PBW basis of $T$ by
	\begin{equation}\label{eq:right-action-existence}
		u\triangleleft G^iX^j=
		\begin{cases}
			\phi_\beta^i(u),&j=0,\\
			0,&j>0.
		\end{cases}
	\end{equation}
	For a right module the operator attached to a product $ab$ is
	$\rho_b\circ\rho_a$. Here $\rho_G=\phi_\beta$ and $\rho_X=0$, so
	\[
	\rho_{G^N}=\phi_\beta^N=\id_R,
	\qquad \rho_{X^n}=0,
	\qquad \rho_{GX}=\rho_X\rho_G=0
	=\xi\rho_G\rho_X=\xi\rho_{XG}.
	\]
	Hence the relations $G^N=1$, $X^n=0$, and $GX=\xi XG$ are respected, and
	\eqref{eq:right-action-existence} is a well-defined right $T$-module
	structure on $R$.
	
	It remains to verify the coalgebra and matched-pair axioms globally.
	
	\subsection{The module-coalgebra conditions}
	
	For the left action, $g$ acts as the identity coalgebra map. The operator $D$
	is a coderivation. Indeed, every summand of
	$\Delta(G^iX^j)$ in \eqref{eq:taft-coproduct} has total $X$-degree $j$;
	therefore \eqref{eq:D-basis} gives
	\[
	\Delta(D(G^iX^j))
	=(D\otimes\id+\id\otimes D)\Delta(G^iX^j).
	\]
	This is precisely the module-coalgebra condition for the
	$(1,g)$-skew-primitive element $x$, because $g$ acts as the identity. The
	counit identity follows from \eqref{eq:D-basis}. Since the action is a
	representation of the algebra $R$ and $\Delta_R$ is multiplicative, the
	module-coalgebra identity for $g$ and $x$ extends to every product in $R$.
	
	For the right action, the condition for $G$ follows because $\phi_\beta$ is a
	coalgebra automorphism. For $X$, both sides of
	\[
	\Delta(u\triangleleft X)
	=\sum u_{(1)}\triangleleft X_{(1)}
	\otimes u_{(2)}\triangleleft X_{(2)}
	\]
	vanish: $u\triangleleft X=0$, and in
	$\Delta(X)=X\otimes1+G\otimes X$ each term contains a right action by $X$ in
	one tensor factor. If the coalgebra-action identity holds for $a,b\in T$,
	then it also holds for $ab$ by the right module identity
	$\rho_{ab}=\rho_b\rho_a$ and the multiplicativity of $\Delta_T$. Hence the
	identity extends from $G,X$ to all of $T$. The counit condition is checked in
	the same way.
	
	The unit identities follow immediately from
	\eqref{eq:left-action-existence} and \eqref{eq:right-action-existence}.
	
	\subsection{Verification of MP1}
	
	We verify MP1 on the PBW basis. Let
	\[
	a=G^iX^j,
	\qquad
	b=G^kX^\ell.
	\]
	If $j+\ell\ge n$, then $ab=0$. On the right-hand side of MP1, the right
	action is nonzero only for the $s=0$ term of $\Delta(a)$, so the same
	calculation as below expresses that side as a scalar multiple of $ab$;
	hence it also vanishes. We may therefore assume $j+\ell<n$. Then $ab\ne0$,
	and the left-hand side is
	\[
	u\triangleright(ab)=\chi_{j+\ell}(u)ab.
	\]
	In the coproduct of $a$, the factor
	$u_{(2)}\triangleleft a_{(2)}$ is nonzero only for the summand with
	$X$-degree $0$ in $a_{(2)}$, namely the $s=0$ term of
	\eqref{eq:taft-coproduct}. Hence the right-hand side of MP1 equals
	\[
	\sum \chi_j(u_{(1)})
	\chi_\ell(\phi_\beta^i(u_{(2)}))\,ab.
	\]
	Since $\chi_\ell\circ\phi_\beta^i=\chi_\ell$, this becomes
	\[
	(\chi_j*\chi_\ell)(u)\,ab.
	\]
	The convolution of these characters satisfies
	\[
	\chi_j*\chi_\ell=\chi_{j+\ell},
	\]
	as is checked on the algebra generators $g,x$ using
	$\Delta(g)=g\otimes g$ and $\Delta(x)=x\otimes1+g\otimes x$. Therefore MP1
	holds for all $u,a,b$.
	
	\subsection{Verification of MP2}
	
	Let $a=G^iX^j$. If $j>0$, consider a summand of
	$\Delta(a)$ indexed by $s$ in \eqref{eq:taft-coproduct}. For
	\[
	u\triangleleft(v_{(1)}\triangleright a_{(1)})
	\]
	to be nonzero, $a_{(1)}$ must have $X$-degree zero, which forces $s=j$.
	For
	\[
	v_{(2)}\triangleleft a_{(2)}
	\]
	to be nonzero, $a_{(2)}$ must have $X$-degree zero, which forces $s=0$.
	These conditions are incompatible when $j>0$. Thus both sides of MP2 are
	zero.
	
	If $j=0$, then $a=G^i$ and
	$v_{(1)}\triangleright G^i=\varepsilon(v_{(1)})G^i$. Therefore the
	right-hand side of MP2 is
	\[
	\sum
	\phi_\beta^i(u)\,
	\varepsilon(v_{(1)})\phi_\beta^i(v_{(2)})
	=\phi_\beta^i(u)\phi_\beta^i(v)
	=\phi_\beta^i(uv),
	\]
	which is exactly $(uv)\triangleleft G^i$. Hence MP2 holds for all elements.
	
	\subsection{Verification of MP3}
	
	Fix $a=G^iX^j$. In the left-hand side of MP3, the right action
	$u_{(1)}\triangleleft a_{(1)}$ is nonzero only when $a_{(1)}$ has
	$X$-degree zero. In \eqref{eq:taft-coproduct} this is the $s=j$ term. Hence
	\[
	\mathrm{LHS}
	=\sum
	\phi_\beta^{i+j}(u_{(1)})
	\otimes\chi_j(u_{(2)})G^iX^j.
	\]
	On the right-hand side, $u_{(2)}\triangleleft a_{(2)}$ is nonzero only for
	$s=0$. Thus
	\[
	\mathrm{RHS}
	=\sum
	\phi_\beta^i(u_{(2)})
	\otimes\chi_j(u_{(1)})G^iX^j.
	\]
	It remains to prove
	\begin{equation}\label{eq:winding-identity}
		\sum\phi_\beta^j(u_{(1)})\chi_j(u_{(2)})
		=\sum\chi_j(u_{(1)})u_{(2)}
		\qquad(u\in R).
	\end{equation}
	Define the right and left winding maps
	\[
	R_{\chi_j}(u)=\sum u_{(1)}\chi_j(u_{(2)}),
	\qquad
	L_{\chi_j}(u)=\sum\chi_j(u_{(1)})u_{(2)}.
	\]
	Because $\chi_j$ is a character, both maps are algebra maps. Therefore
	$\phi_\beta^jR_{\chi_j}=L_{\chi_j}$ may be checked on $g$ and $x$. It is
	immediate for $g$. Put $c=j\beta$. Then
	\[
	R_{\chi_j}(x)=x+cg,
	\qquad
	L_{\chi_j}(x)=x+c,
	\]
	and
	\[
	\phi_\beta^j(x+cg)
	=x+c(1-g)+cg=x+c.
	\]
	This proves \eqref{eq:winding-identity}, and hence MP3 holds for all
	$u\in R$ and $a\in T$.
	
	We have proved the sufficiency statement.
	
	\begin{pro}\label{prop:existence}
		For every $\beta\in\F_p$ satisfying $N\beta=0$, the actions
		\eqref{eq:left-action-existence} and \eqref{eq:right-action-existence} define
		a matched pair $(T,R,\triangleright,\triangleleft)$. These are precisely the
		matched pairs described in Proposition~\ref{prop:second}.
	\end{pro}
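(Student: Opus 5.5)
Proposition~\ref{prop:existence} essentially gathers what Section~\ref{sec:exist} has already checked, so the plan is to assemble those verifications in order and then compare with Proposition~\ref{prop:second}.

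First, well-definedness. Fix $\beta\in\F_p$ with $N\beta=0$. The left action \eqref{eq:left-action-existence} is a module structure because $\chi_j$ is an algebra character for each $j$: the relations $g^p=1$, $x^p=x$ and $gx=xg+g(1-g)$ reduce to $1=1$, $(j\beta)^p=j\beta$ (true since $j\beta\in\F_p$) and $c=c$. The right action \eqref{eq:right-action-existence} is a module structure because $\rho_G=\phi_\beta=\operatorname{Ad}_{g^m}$ has order dividing $N$ (this uses $N\beta=0$), and $\rho_X=0$ kills the other two relations of $T$. The unit conditions $u\triangleright 1=\varepsilon(u)1$ and $1\triangleleft a=\varepsilon(a)1$ are read off from the formulas, since $\chi_0=\varepsilon$ and $\phi_\beta(1)=1$.

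Second, the coalgebra axioms. On the left side, $g$ acts as the identity and $x$ acts by the coderivation $D$. Because $\Delta_R$ is multiplicative, the module-coalgebra identity propagates from the generators $g,x$ to all of $R$. On the right side, $\phi_\beta$ is a coalgebra map, and the identity for $X$ holds trivially since both sides vanish. Because $\rho_{ab}=\rho_b\rho_a$, the identity propagates to products in $T$. The counit conditions are checked the same way.

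Third, the three matched-pair identities, all checked on PBW elements $a=G^iX^j$. The key observation is that $u\triangleleft a_{(2)}$ vanishes unless $a_{(2)}$ has $X$-degree $0$.
\begin{itemize}
\item For MP1, this reduces the identity to the convolution law $\chi_j*\chi_\ell=\chi_{j+\ell}$ together with $\chi_\ell\circ\phi_\beta=\chi_\ell$.
\item For MP2, when $j>0$ the two requirements on $a_{(1)}$ and $a_{(2)}$ are incompatible, so both sides vanish. When $j=0$ the identity is just multiplicativity of $\phi_\beta^i$.
\item For MP3, the identity reduces to the winding identity \eqref{eq:winding-identity}. Both sides of that identity are algebra maps, so it suffices to check it on $g$ and $x$.
\end{itemize}
The main obstacle is the case $j+\ell\ge n$ in MP1. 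There the left side is $0$, and the right side must be seen to be a scalar multiple of $ab=0$. I would handle this by running the same $s=0$ reduction, which gives $\chi_j(u_{(1)})\chi_\ell(\phi^i_\beta u_{(2)})\,G^iX^jG^kX^\ell$, a multiple of $ab$ and hence $0$.

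Finally, to see that these are precisely the matched pairs of Proposition~\ref{prop:second}, compare formulas. On generators the actions constructed here are $g\triangleright a=a$, $x\triangleright X=\beta X$, $x\triangleright G=0$, $x\triangleleft G=x+\beta(1-g)$, $x\triangleleft X=0$ and $g\triangleleft a=\varepsilon(a)g$. These coincide with \eqref{eq:x-actions-generators} and Proposition~\ref{prop:first}. Proposition~\ref{prop:second} also shows that every matched pair has this form for a unique admissible $\beta$. So $\beta\mapsto(\triangleright,\triangleleft)$ is a bijection from $\{\beta\in\F_p: N\beta=0\}$ onto the set of matched pairs.
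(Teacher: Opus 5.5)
Your proposal is correct and follows the paper's route essentially step for step. The paper's proof likewise collects the verifications of Section~\ref{sec:exist}: well-definedness of both module structures, the module-coalgebra conditions propagated from the generators, MP1 via $\chi_j*\chi_\ell=\chi_{j+\ell}$ and $\chi_\ell\circ\phi_\beta=\chi_\ell$, MP2 via the incompatibility of the $s=0$ and $s=j$ terms, and MP3 via the winding identity \eqref{eq:winding-identity}; it then cites Proposition~\ref{prop:second} to rule out any other matched pairs.
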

	
	\begin{proof}
		All module, module-coalgebra, unit, and compatibility conditions were verified
		above. Proposition~\ref{prop:second} proves that no other matched pairs occur.
	\end{proof}
	
	Combining Propositions~\ref{prop:second} and \ref{prop:existence} proves the
	matched-pair part of Theorem~\ref{thm:main}. In particular, if $p\nmid N$,
	then $N\ne0$ in $\K$, so $N\beta=0$ implies $\beta=0$ and both actions are
	trivial in the standard sense. If $p\mid N$, all $\beta\in\F_p$ occur.
	
	\section{Bicrossed products and isomorphism classes}\label{sec:isom}
	
	For a matched pair with parameter $\beta$, the bicrossed product
	$E_\beta=T\bowtie_\beta R$ has underlying vector space $T\otimes R$ and
	tensor-product coalgebra. Its multiplication gives
	\[
	(1\bowtie g)(G\bowtie1)=G\bowtie g,
	\qquad
	(1\bowtie g)(X\bowtie1)=X\bowtie g,
	\]
	\[
	(1\bowtie x)(G\bowtie1)
	=G\bowtie x+\beta G\bowtie(1-g),
	\]
	\[
	(1\bowtie x)(X\bowtie1)
	=X\bowtie x+\beta X\bowtie1.
	\]
	Suppressing $\bowtie$, the algebra $E_\beta$ is generated by $G,X,g,x$ with
	the defining relations of $T$ and $R$ and the cross relations
	\begin{equation}\label{eq:cross-relations}
		gG=Gg,
		\qquad gX=Xg,
		\qquad xG=Gx+\beta G(1-g),
		\qquad xX=Xx+\beta X.
	\end{equation}
	When $\beta=0$, this is the tensor product Hopf algebra $T\otimes R$.
	When $\beta\ne0$ and $p\mid N$, it is a nontrivial bicrossed product.
	
	The coalgebra of every $E_\beta$ is the same tensor-product coalgebra
	$T\otimes R$. We begin with two elementary lemmas.
	
	\begin{lem}\label{lem:grouplikes-product}
		For every $\beta$,
		\[
		\G(E_\beta)=\{G^ig^j\mid0\le i<N,\ 0\le j<p\}.
		\]
	\end{lem}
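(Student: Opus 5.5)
The group-like elements depend only on the coalgebra structure. The coalgebra of $E_\beta$ is the tensor-product coalgebra $T\otimes R$, independently of $\beta$, so I will work entirely in $T\otimes R$ and ignore the multiplication.

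First, the elements $G^i\otimes g^j$ are group-like, since $\Delta(G^i\otimes g^j)=(G^i\otimes g^j)\otimes(G^i\otimes g^j)$ in the tensor coalgebra. They are distinct and nonzero, and they are linearly independent because $\{G^i\}$ and $\{g^j\}$ are parts of the PBW bases of $T$ and $R$. It remains to check that they are exactly the elements $G^ig^j$ of $E_\beta$. This is immediate: $(G^i\bowtie1)(1\bowtie g^j)=G^i(1\triangleright 1)\bowtie(1\triangleleft 1)g^j=G^i\bowtie g^j$.

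For the reverse inclusion, I use the general fact that if $C$ and $D$ are coalgebras over a field, then $\G(C\otimes D)=\{c\otimes d\mid c\in\G(C),\ d\in\G(D)\}$. To prove it, let $z\in\G(C\otimes D)$ and apply $\id\otimes\varepsilon_D\otimes\varepsilon_C\otimes\id$ to $\Delta(z)=z\otimes z$. Writing $c=(\id\otimes\varepsilon_D)(z)$ and $d=(\varepsilon_C\otimes\id)(z)$, the left-hand side becomes $z$ by the counit axioms, while the right-hand side becomes $c\otimes d$. Hence $z=c\otimes d$, and since $z\neq0$, both $c$ and $d$ are nonzero. Now $\Delta(z)=z\otimes z$ reads $(c_{(1)}\otimes d_{(1)})\otimes(c_{(2)}\otimes d_{(2)})=(c\otimes d)\otimes(c\otimes d)$. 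Applying $\id\otimes\varepsilon\otimes\id\otimes\varepsilon$ gives $\varepsilon(d)^2\,\Delta(c)=\varepsilon(d)^2\,c\otimes c$. Moreover, $\varepsilon(z)=1$ because $z$ is a nonzero group-like element, so $\varepsilon(c)\varepsilon(d)=1$ and in particular $\varepsilon(d)\neq0$. Therefore $c\in\G(C)$, and symmetrically $d\in\G(D)$.

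Finally, I combine this with $\G(T)=\{G^i\}$ and $\G(R)=\{g^j\}$ from Section~\ref{sec:prelim}. This gives exactly the $Np$ elements $G^ig^j$. The only step needing care is the factorization argument in the tensor coalgebra, specifically handling the scalars $\varepsilon(c)$ and $\varepsilon(d)$, which I have done above. The remaining steps are routine.
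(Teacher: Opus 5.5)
Your proof follows essentially the same approach as the paper. The paper also uses the projection identity $(\pi_T\otimes\pi_R)\Delta(z)=z$, with $\pi_T=\id_T\otimes\varepsilon_R$ and $\pi_R=\varepsilon_T\otimes\id_R$, to write a group-like $z$ as $\pi_T(z)\otimes\pi_R(z)$. It then concludes directly because $\pi_T,\pi_R$ are coalgebra maps, so these (nonzero) factors are group-like.

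There is one slip in exactly the step you flagged. Applying $\id\otimes\varepsilon_D\otimes\id\otimes\varepsilon_D$ to $\Delta(c\otimes d)$ gives $\varepsilon(d)\Delta(c)$, not $\varepsilon(d)^2\Delta(c)$, because $\sum\varepsilon(d_{(1)})\varepsilon(d_{(2)})=\varepsilon(d)$. So $\varepsilon(d)\neq0$ only yields $\Delta(c)=\varepsilon(d)\,c\otimes c$, and you actually need $\varepsilon(d)=1$. Knowing $\varepsilon(c)\varepsilon(d)=1$ is not enough by itself.

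The needed fact does hold for your particular $c,d$. You have $\varepsilon_D(d)=(\varepsilon_C\otimes\varepsilon_D)(z)=\varepsilon(z)=1$; equivalently, $c=(\id\otimes\varepsilon_D)(c\otimes d)=\varepsilon(d)c$ with $c\neq0$. Symmetrically, $\varepsilon(c)=1$. Alternatively, note as the paper does that $c$ is the image of $z$ under the coalgebra map $\id\otimes\varepsilon_D$.

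With that one-line fix the argument is complete. Your explicit check that $G^i\bowtie g^j=(G^i\bowtie1)(1\bowtie g^j)$ is a small point the paper leaves implicit.
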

	\begin{proof}
		This is a standard fact for tensor-product coalgebras; we include the proof
		for completeness.	Let $\pi_T=\id_T\otimes\varepsilon_R$ and $\pi_R=\varepsilon_T\otimes\id_R$.
		These are coalgebra maps. For the tensor-product coalgebra $T\otimes R$, the
		standard projection identity
		\begin{equation}\label{eq:proj-reconstruction}
			(\pi_T\otimes\pi_R)\Delta(w)=w,\qquad w\in T\otimes R
		\end{equation}
		holds. If $z\in E_\beta$ is group-like, then $\Delta(z)=z\otimes z$, so
		\eqref{eq:proj-reconstruction} gives
		\[
		z=(\pi_T\otimes\pi_R)(z\otimes z)=\pi_T(z)\otimes\pi_R(z).
		\]
		Since $\pi_T(z)\in\G(T)$ and $\pi_R(z)\in\G(R)$, we may write
		$\pi_T(z)=G^i$ and $\pi_R(z)=g^j$, hence $z=G^i\otimes g^j$.
		Conversely, every such tensor is group-like.
	\end{proof}
	
	\begin{lem}\label{lem:primitive-product}
		Let $a\in\G(T)$ and $b\in\G(R)$, and set $h=a\otimes b$. If
		$z\in\Pp_{1,h}(T\otimes R)$, then there exist
		$u\in\Pp_{1,a}(T)$ and $v\in\Pp_{1,b}(R)$ such that
		\begin{equation}\label{eq:primitive-decomposition}
			z=u\otimes 1+a\otimes v,
		\end{equation}
		and the pair $(u,v)$ satisfies the compatibility condition
		\begin{equation}\label{eq:mixed-primitive-condition}
			u\otimes(1-b)+(a-1)\otimes v=0
			\quad\text{in }T\otimes R.
		\end{equation}
		Consequently:
		\begin{enumerate}
			\item if $b=1$, then
			$\Pp_{1,a\otimes 1}(T\otimes R)=\Pp_{1,a}(T)\otimes 1$;
			\item if $a=1$, then
			$\Pp_{1,1\otimes b}(T\otimes R)=1\otimes\Pp_{1,b}(R)$;
			\item if $a\ne 1$ and $b\ne 1$, then
			\[
			\Pp_{1,a\otimes b}(T\otimes R)=\K(a\otimes b-1).
			\]
		\end{enumerate}
	\end{lem}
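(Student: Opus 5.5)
We use the coalgebra projections $\pi_T=\id_T\otimes\varepsilon_R$ and $\pi_R=\varepsilon_T\otimes\id_R$ from the proof of Lemma~\ref{lem:grouplikes-product}, together with the reconstruction identity \eqref{eq:proj-reconstruction}. Let $z\in\Pp_{1,h}(T\otimes R)$, so $\Delta(z)=z\otimes1+h\otimes z$. Since $\pi_T$ and $\pi_R$ are coalgebra maps with $\pi_T(h)=a$ and $\pi_R(h)=b$, the elements $u:=\pi_T(z)$ and $v:=\pi_R(z)$ lie in $\Pp_{1,a}(T)$ and $\Pp_{1,b}(R)$, respectively. Applying $\pi_T\otimes\pi_R$ to $\Delta(z)=z\otimes1+h\otimes z$ and using \eqref{eq:proj-reconstruction} gives
\[
z=\pi_T(z)\otimes\pi_R(1)+\pi_T(h)\otimes\pi_R(z)=u\otimes1+a\otimes v,
\]
which is \eqref{eq:primitive-decomposition}. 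Applying $\pi_R\otimes\pi_T$ instead, we compute $(\pi_R\otimes\pi_T)\Delta(w)$ for $w=t\otimes r$, which equals $\varepsilon(t_{(1)})r_{(1)}\otimes t_{(2)}\varepsilon(r_{(2)})=r\otimes t$. The same computation therefore yields the flipped decomposition $z=u\otimes b+1\otimes v$ (read in $T\otimes R$). Subtracting the two expressions for $z$ gives $u\otimes(1-b)+(a-1)\otimes v=0$, which is \eqref{eq:mixed-primitive-condition}. Skew-primitivity of $u$ and $v$ and counit considerations ensure there is no additive ambiguity, since $\varepsilon(u)=\varepsilon(v)=0$.

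For the consequences we treat the cases in turn. In case (1), $b=1$ forces $(a-1)\otimes v=0$; thus either $a=1$, in which case $u\in\Pp_{1,1}(T)=0$ by Lemma~\ref{lem:P11-T}, or $v=0$. In both situations $z=u\otimes1$. Conversely, $u\otimes1$ is visibly $(1,a\otimes1)$-primitive. Case (2) is symmetric: $a=1$ forces $u\otimes(1-b)=0$, and when $b=1$ we use $\Pp_{1,1}(R)=0$ from Lemma~\ref{lem:primitive-R}.

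In case (3), with $a\ne1$ and $b\ne1$, the relation \eqref{eq:mixed-primitive-condition} says $u\otimes(1-b)=(1-a)\otimes v$, where both $1-a$ and $1-b$ are nonzero. Comparing simple tensors, $u$ is a scalar multiple of $1-a$ and $v$ is a scalar multiple of $1-b$ with the same scalar: say $u=\lambda(a-1)$ and $v=\lambda(b-1)$. Then $z=\lambda(a\otimes1-1\otimes1)+\lambda(a\otimes b-a\otimes1)=\lambda(a\otimes b-1)$. Conversely, $a\otimes b-1$ is $(1,a\otimes b)$-primitive, since $a\otimes b$ is group-like. The only step that needs care is the flipped reconstruction identity; everything else is linear algebra in simple tensors.
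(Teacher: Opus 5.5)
Your proof is correct, and you obtain the decomposition $z=u\otimes1+a\otimes v$ exactly as the paper does. Where you differ is the compatibility condition. You use a second identity, $(\pi_R\otimes\pi_T)\Delta(w)=\tau(w)$ with $\tau$ the flip, to get $z=u\otimes b+1\otimes v$ and then subtract. The paper instead substitutes the decomposition back into $\Delta(z)=z\otimes1+h\otimes z$ and ``compares terms''. Done carefully, that substitution leaves the equation $(a\otimes v)\otimes\bigl((a-1)\otimes1\bigr)=\bigl(a\otimes(b-1)\bigr)\otimes(u\otimes1)$ in $(T\otimes R)^{\otimes2}$. The natural way to extract a condition in $T\otimes R$ from it is to apply exactly $\pi_R\otimes\pi_T$ and then flip. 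So your device is what makes the paper's comparison precise, and it reaches the condition in one line. You also rightly use only that $\pi_T,\pi_R$ are coalgebra maps, whereas the paper says bialgebra maps. Coalgebra maps are what the lemma needs when it is applied to $E_\beta$, where $\pi_R$ is not multiplicative for $\beta\ne0$.

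There is one slip in the consequences. You treat the subcase $a=b=1$ explicitly, which is better than the paper's ``hence $v=0$'' in (1), since that silently assumes $a\ne1$. However, your two citations are swapped:
\begin{enumerate}
\item In (1) with $a=1$, knowing $u=0$ only gives $z=1\otimes v$. What yields $z=u\otimes1$ is $v\in\Pp_{1,1}(R)=0$ (Lemma~\ref{lem:primitive-R}).
\item In (2) with $b=1$, what you need is $u\in\Pp_{1,1}(T)=0$ (Lemma~\ref{lem:P11-T}).
\end{enumerate}
Since $h=1$ forces both $u$ and $v$ to vanish, $z=0$ and both statements hold, so only the references need fixing. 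The sentence about ``no additive ambiguity'' does no work and can be dropped.
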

	\begin{proof}
		We use the projections $\pi_T,\pi_R$ and the projection identity
		\eqref{eq:proj-reconstruction} from Lemma~\ref{lem:grouplikes-product}.
		Since $\pi_T,\pi_R$ are bialgebra maps,
		$u:=\pi_T(z)\in\mathcal P_{1,a}(T)$ and $v:=\pi_R(z)\in\mathcal P_{1,b}(R)$.
		Applying \eqref{eq:proj-reconstruction} to $z$ and using
		$\Delta(z)=z\otimes 1+(a\otimes b)\otimes z$ immediately gives
		\begin{equation}\label{eq:decomp}
			z=u\otimes 1_R+a\otimes v.
		\end{equation}
		Substituting \eqref{eq:decomp} back into the skew-primitive equation, and
		comparing terms after canceling the common outer summands, yields the
		compatibility condition
		\begin{equation}\label{eq:mixed}
			u\otimes(1_R-b)+(a-1_T)\otimes v=0.
		\end{equation}
		
		Now we prove the three consequences.
		\begin{enumerate}
			\item If $b=1_R$, then \eqref{eq:mixed} gives
			$(a-1_T)\otimes v=0$, hence $v=0$ and $z=u\otimes1_R$.
			The converse is clear.
			\item If $a=1_T$, the same argument shows $u=0$ and
			$z=1_T\otimes v$.
			\item Assume $a\ne1_T$ and $b\ne1_R$. Projecting \eqref{eq:mixed}
			onto $(T/\K(a-1_T))\otimes R$ gives $u\in\K(a-1_T)$, and
			projecting onto $T\otimes(R/\K(1_R-b))$ gives $v\in\K(1_R-b)$.
			Writing $u=\lambda(a-1_T)$ and $v=\mu(1_R-b)$ and substituting
			into \eqref{eq:mixed} yields $\lambda+\mu=0$. Hence
			\[
			z=\lambda(a-1_T)\otimes1_R+a\otimes(-\lambda)(1_R-b)
			=\lambda(a\otimes b-1).
			\]
			The converse is immediate.
		\end{enumerate}
	\end{proof}

	\begin{pro}\label{prop:isom}
		Let $\beta,\beta'\in\F_p$ satisfy $N\beta=N\beta'=0$. Then
		\[
		E_\beta\cong E_{\beta'}
		\quad\text{as Hopf algebras}
		\quad\Longleftrightarrow\quad
		\beta=\beta'.
		\]
		Consequently, if $p\mid N$, the $p$ parameters in $\F_p$ give exactly $p$
		pairwise nonisomorphic bicrossed products.
	\end{pro}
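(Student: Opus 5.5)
The plan is to prove only the nontrivial direction. If $\beta=\beta'$ the identity map is an isomorphism, so assume a Hopf algebra isomorphism $f:E_\beta\to E_{\beta'}$ is given. We track where $f$ sends the generators $G,g,X,x$ by means of coalgebra invariants, and then read off $\beta$ from the cross relation $xX=Xx+\beta X$ in \eqref{eq:cross-relations}. Throughout we use that both $E_\beta$ and $E_{\beta'}$ have the tensor-product coalgebra and the PBW basis $\{G^iX^jg^kx^l\}$.

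\textbf{Step 1: $f$ fixes the group-likes $G$ and $g$.} The map $f$ restricts to a group isomorphism of $\G(E_\beta)=\{G^ig^j\}$, by Lemma~\ref{lem:grouplikes-product}. It also maps $\Pp_{1,h}(E_\beta)$ bijectively onto $\Pp_{1,f(h)}(E_{\beta'})$. By Lemma~\ref{lem:primitive-product}, Proposition~\ref{prop:primitive-T} and Lemma~\ref{lem:primitive-R}, these spaces have dimension at most $1$, with exactly two exceptions:
\[
\Pp_{1,G}=\K(G-1)\oplus\K X,\qquad \Pp_{1,g}=\K(g-1)\oplus\K x,
\]
each of dimension $2$. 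Hence $f(G)\in\{G,g\}$ and $f(g)\in\{G,g\}$, and $f(G)\neq f(g)$.

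To exclude the swap $f(g)=G$, note that then $f(x)=\lambda X+\mu(G-1)$ with $\lambda\neq0$, since otherwise $f(x)$ would be group-like-spanned and bijectivity would fail. Applying $f$ to $gx-xg=g(1-g)$ gives
\[
Gf(x)-f(x)G=\lambda(\xi-1)XG .
\]
This element involves $X$, whereas $f(g(1-g))=G(1-G)$ lies in the span of the group-likes. That contradicts the PBW basis of $E_{\beta'}$. Thus $f(G)=G$ and $f(g)=g$.

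\textbf{Step 2: form of $f$ on $X$ and $x$.} Since $f(G)=G$ and $f(g)=g$, we can write
\[
f(X)=\lambda X+\mu(G-1),\qquad f(x)=\nu x+\kappa(g-1),
\]
with $\lambda,\nu\neq0$. This again follows from bijectivity and the fact that $f$ preserves the span of group-likes. Applying $f$ to $gx-xg=g(1-g)$ gives $\nu\,g(1-g)=g(1-g)$, because $g$ commutes with $g-1$. Hence $\nu=1$.

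\textbf{Step 3: compare the cross relation.} Apply $f$ to $xX-Xx=\beta X$ and compute in $E_{\beta'}$ using \eqref{eq:cross-relations}. Since $g$ commutes with $X$ and $G$, the term $\kappa(g-1)$ contributes nothing. We obtain
\[
[x,\lambda X+\mu(G-1)]=\lambda\beta' X+\mu\beta' G(1-g).
\]
This must equal
\[
\beta f(X)=\beta\lambda X+\beta\mu(G-1).
\]
Comparing the coefficients of the PBW basis element $X$ gives $\lambda\beta'=\lambda\beta$. Since $\lambda\neq0$, we conclude $\beta=\beta'$.

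The final count is then immediate. When $p\mid N$, every $\beta\in\F_p$ satisfies $N\beta=0$, so the $p$ values of $\beta$ give $p$ pairwise nonisomorphic Hopf algebras.

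The main obstacle I expect is Step 1, specifically ruling out $f(g)=G$ when $N=p$, where the group orders do not separate $G$ from $g$. The nilpotent-versus-group-like argument above should settle it. A secondary point is to make sure that the dimension count of the spaces $\Pp_{1,h}$ in $E_\beta$ really uses only the tensor-product coalgebra, which Lemma~\ref{lem:primitive-product} provides.
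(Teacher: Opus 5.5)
Your proof is correct. It follows the paper's overall plan: identify $G$ and $g$ as the only group-likes $h$ with $\dim\Pp_{1,h}=2$, normalize $f(x)=x+\kappa(g-1)$, and read off $\beta$ from $xX-Xx=\beta X$. The one step done genuinely differently is excluding the swap $f(g)=G$.

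The paper rules out the swap by comparing orders: $g$ has order $p$ and $G$ has order $N$. These always differ, because $n\mid N$, $n\ge 2$ and $(p,n)=1$. If $p\mid N$ then $pn\mid N$, so $N>p$; if $p\nmid N$ it is immediate. So the case $N=p$ that you flagged as the main obstacle cannot occur under the standing hypotheses.

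Your argument via $gx-xg=g(1-g)$ is still valid. Writing $f(x)=\lambda X+\mu(G-1)$, the image of the left side is $\lambda(\xi-1)XG$. This cannot equal the nonzero element $G-G^2$ in the span of the group-likes. You do not even need $\lambda\neq 0$ there, since $\lambda=0$ also gives a contradiction.

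The paper's route is shorter. Yours separates the two skew-primitive pairs intrinsically: $(G,X)$ $\xi$-commute, while $(g,x)$ satisfy the Radford relation. It therefore does not depend on the arithmetic relation between $N$ and $p$.

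A smaller difference is in the last step. The paper first kills the $(G-1)$-component of $f(X)$ using $GX=\xi XG$. You keep $\mu$ and compare only the coefficient of the PBW basis element $X$. Both work.
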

	
	\begin{proof}
		The converse is immediate: if $\beta=\beta'$, the identity on the generators
		is a Hopf algebra isomorphism. We prove necessity.
		
		Let
		\[
		\Phi:E_\beta\longrightarrow E_{\beta'}
		\]
		be a Hopf algebra isomorphism. It preserves group-like elements, their
		orders, and the dimensions of all skew-primitive spaces.
		
		We first prove that $\Phi$ fixes $g$ and $G$. By
		Proposition~\ref{prop:primitive-T}, Lemma~\ref{lem:primitive-R}, and
		Lemma~\ref{lem:primitive-product}, the only group-like elements
		$h\in E_{\beta'}$ for which
		$\dim\Pp_{1,h}(E_{\beta'})=2$ are
		\[
		h=G
		\qquad\text{and}\qquad
		h=g.
		\]
		Indeed, for $G^a$ with $a\ne1$ and for $g^b$ with $b\ne1$ the corresponding
		space has dimension one (or zero at the identity), and every mixed group-like
		$G^ag^b$ with $a,b\ne0$ has a one-dimensional skew-primitive space.
		
		Now $\dim\Pp_{1,g}(E_\beta)=2$, so $\Phi(g)$ is either $g$ or $G$.
		However, $g$ has order $p$, whereas $G$ has order $N$. These orders are
		unequal: if $p\nmid N$ this is immediate, while if $p\mid N$, the assumptions
		$n\mid N$, $(p,n)=1$, and $n\ge2$ imply $pn\mid N$, hence $N>p$. Therefore
		\[
		\Phi(g)=g.
		\]
		The same argument applied to $G$ gives
		\[
		\Phi(G)=G.
		\]
		In particular, when $p\mid N$, the element $G^{N/p}$ cannot be the image of
		$g$: it has order $p$, but
		\[
		\dim\Pp_{1,G^{N/p}}(E_{\beta'})=1,
		\]
		because $N/p\ne1$.
		
		Since $\Phi(g)=g$, Lemma~\ref{lem:primitive-product} and
		Lemma~\ref{lem:primitive-R} give
		\[
		\Phi(x)\in\Pp_{1,g}(E_{\beta'})
		=\K(g-1)\oplus\K x.
		\]
		Write
		\[
		\Phi(x)=a(g-1)+bx.
		\]
		Apply $\Phi$ to the relation
		\[
		gx-xg=g(1-g).
		\]
		Because $g$ commutes with $g-1$, the target relation gives
		\[
		b\,g(1-g)=g(1-g).
		\]
		As $g(1-g)\ne0$, we obtain
		\begin{equation}\label{eq:Phi-x}
			b=1,
			\qquad
			\Phi(x)=x+a(g-1).
		\end{equation}
		
		Similarly, $\Phi(G)=G$ implies
		\[
		\Phi(X)\in\Pp_{1,G}(E_{\beta'})
		=\K(G-1)\oplus\K X.
		\]
		Write $\Phi(X)=c(G-1)+dX$. Applying $\Phi$ to $GX=\xi XG$ gives
		\[
		c(1-\xi)G(G-1)=0,
		\]
		so $c=0$. Since $\Phi$ is injective, $d\ne0$, and hence
		\begin{equation}\label{eq:Phi-X}
			\Phi(X)=dX,
			\qquad d\in\K^\times.
		\end{equation}
		
		Finally, apply $\Phi$ to the cross relation
		\[
		xX-Xx=\beta X.
		\]
		Using \eqref{eq:Phi-x}, \eqref{eq:Phi-X}, and the fact that $g$ commutes with
		$X$, the left-hand side in the target is
		\[
		[x+a(g-1),dX]
		=d[x,X]
		=d\beta'X
		=\beta'\Phi(X).
		\]
		On the other hand,
		\[
		\Phi(\beta X)=\beta\Phi(X).
		\]
		Since $\Phi(X)\ne0$, we conclude
		\[
		\beta'=\beta.
		\]
		This proves the proposition.
	\end{proof}
	
	\begin{proof}[Proof of Theorem~\ref{thm:main}]
		The necessity part is Proposition~\ref{prop:second}. The sufficiency
		part is Proposition~\ref{prop:existence}. The isomorphism classification
		is Proposition~\ref{prop:isom}. Combining these three results proves
		Theorem~\ref{thm:main}.
	\end{proof}

	\begin{rmk}[Relation with liftings of  quantum-plane]\label{rmk:quantum-plane}
		Let $G=\Z_N\times\Z_p$, and let $g_1$ and $g_2$ be generators of the
		first and second factors, respectively. Take the YD-datum
		\[
		\mathcal D=(G,g_1,g_2,\chi_1,\epsilon),
		\]
		where $\chi_1(g_1)=\xi$ with $\xi$ a primitive $n$-th root of unity.
		Let $f:G\to\K$ be an additive map, that is,
		\[
		f(hk)=f(h)+f(k)\qquad (h,k\in G),
		\]
		satisfying
		\[
		f(g_1)=-\beta,\qquad f(g_2)=1.
		\]
		The existence of such an $f$ is equivalent to $N\beta=0$, which is
		exactly the parameter restriction obtained in
		Proposition~\ref{prop:second}. Then the algebra
		$\cH^3(\mathcal D,f,0)$ of \cite{Xpq2} is generated by
		$g_1,x,g_2,y$ with defining relations
		\[
		g_1^N=1,\quad x^n=0,\quad g_1x=\xi xg_1,
		\]
		\[
		g_2^p=1,\quad y^p=y,\quad g_2y=yg_2+g_2(1-g_2),
		\]
		\[
		g_2g_1=g_1g_2,\qquad
		g_2x=xg_2,\qquad
		yg_1=g_1y+\beta g_1(1-g_2),\qquad
		yx=xy+\beta x.
		\]
		These are precisely the defining relations of the bicrossed product
		$T_{N,n,\xi}\bowtie_\beta R$.
	\end{rmk}

	\section*{Acknowledgements}
	
	This work was partially supported by the National Natural Science Foundation
	of China (Grant No. 12401041). The author is grateful to the authors of the references for their work, which motivated the present paper.

	\section*{Disclosure statement}
	No potential conflict of interest was reported by the author.

\end{document}